\newtheorem{remark}{Remark}
\newcommand{\dt}{\Delta t}
\newcommand{\BigOh}{\mathcal O}
\title{Method of lines transpose: High order L-stable ${\mathcal O}(N)$ schemes for parabolic equations using successive convolution}
\author{
    Matthew F. Causley\thanks{
    Kettering University,
    Department of Mathematics, 
    1700 University Ave,
    Flint, Michigan, 48504, USA
    ({\tt mcausley@kettering.edu})
}
\and Hana Cho\thanks{
    Michigan State University,
    Department of Mathematics, 
    619 Red Cedar Rd.,
    East Lansing, Michigan, 48824, USA ({\tt chohana@msu.edu})
}
\and Andrew J. Christlieb\thanks{
    Michigan State University,
    Department of Mathematics and Electrical Engineering
    ({\tt andrewch@math.msu.edu})
}
\and David C. Seal\thanks{
    U.S. Naval Academy,
    Department of Mathematics, 
    121 Blake Road,
    Annapolis, MD 21402, USA
    ({\tt seal@usna.edu})
    }
}
\begin{document}

\maketitle
\slugger{sinum}{xxxx}{xx}{x}{x--x}

\begin{abstract}
We present a new solver for nonlinear parabolic problems that is L-stable
and achieves high order accuracy in space and time.  The solver is built by
first constructing a single-dimensional heat equation solver that 
uses fast $\BigOh(N)$ convolution.  This fundamental solver has arbitrary order of accuracy 
in space, and is based on the use of the Green's function 
to invert a modified Helmholtz equation.
Higher orders of accuracy in
time are then constructed through a novel technique known as successive
convolution (or resolvent expansions).
These resolvent expansions facilitate our proofs of stability and convergence,
and permit us to construct schemes that have provable stiff decay.
The multi-dimensional solver is built by repeated application of 
dimensionally split independent fundamental solvers.
Finally, we solve nonlinear parabolic problems by 
using the integrating factor method, where we apply the basic scheme to
invert linear terms (that look like a heat equation),
and make use of Hermite-Birkhoff interpolants to integrate the remaining 
nonlinear terms.  Our solver is applied to several linear and nonlinear
equations including heat, Allen-Cahn, and the Fitzhugh-Nagumo
system of equations in one and two dimensions.
\end{abstract}

%
%

\begin{keywords}
Method of lines transpose, transverse method of lines, Rothe's method, parabolic PDEs, implicit methods, boundary integral methods,
alternating direction implicit methods, ADI schemes, higher order L-stable,
multiderivative
\end{keywords}

		\section{Introduction}	\label{sec:introduction}

The prototypical parabolic differential equation is the heat equation.  It 
forms a cornerstone of mathematics and physics, and its understanding
is essential for defining more complicated mathematical models. 
Fourier introduced this equation as a means to describe transient heat flow.
Fick quickly recognized its importance to particle
and chemical concentrations. As a result, parabolic
equations are now ubiquitous in describing diffusion processes, which are
found in a vast array of physical problems, among which are reaction-diffusion
models of chemical kinetics \cite{Fisher1999,Fitzhugh1961,Nagumo1962}, phase
field models describing morphology and pattern formation in multiphase fluids
and solids \cite{Cahn1961,Cahn1971,Dai2013}, and even the volatility of stocks
and bonds in mathematical finance \cite{Shreve2004}.

Numerical solutions of (linear and nonlinear) diffusion equations have been
the subject of active research for many decades. As early as the 1950's
and 60's, it was recognized
that due to the parabolic scaling, method of lines discretizations of the heat
equation lead to numerically stiff systems of equations, especially for
explicit time stepping.
Larger time steps (on the order of the mesh spacing) can be taken with fully implicit solvers,
but in practice, full matrix inversions may become difficult and costly,
especially when memory is extremely limited, as was especially the case
for early computers.
%
Thus, alternate
dimensionally implicit (ADI) splitting methods
\cite{Douglas1955,Douglas1956,Peaceman1955,Douglas1962,Fairweather1967,Crank1996},
that make use of dimensional splitting and tridiagonal solvers, quickly gained
popularity as part of an effort to reduce the amount of memory required to
invert these systems.

Later on, memory constraints no longer defined the bottleneck for computing, and attention
shifted toward methods that focused on reducing floating point operations
(FLOPs), albeit with additional memory constraints. Most notable among these are Krylov methods
\cite{Lambers2008,Jia2008}, boundary integral methods \cite{Greengard1991,
Kropinski2011}, and quadrature methods
\cite{Lubich1992,Jiang2013,Kassam2005,Tausch2007,Li2009}. However, with the
advent of GPU processors, it appears that we are yet again seeing a paradigm
shift towards methods that should emphasize small memory footprints, even at the expense
of incurring a higher operation count. Thus, ADI-like methods, which can
efficiently decompose larger problems and limit overhead communication,
warrant further investigation, and these features are the motivating factor for this
work.

In this paper we propose a novel numerical method for obtaining solutions to
the linear heat equation, and nonlinear reaction-diffusion type equations.
As an alternative to classical MOL formulations, we use the method of lines
transpose (MOL$^T$), which is sometimes referred to as Rothe's method
\cite{Rothe32}, or the transverse method of lines \cite{Salazar2000}. In this
case, the PDE is first discretized in time, and the resulting semi-discrete
(modified Helmholtz) problem can be solved using a variety of methods. From
potential theory \cite{Jia2008, Kropinski2011}, the solution can be
constructed by discretizing boundary integrals. However, with dimensional
splitting (that is related to the original ADI formulations), the MOL$^T$ can be used to
analytically solve simpler, one-dimensional boundary value problems, and the
subsequent solution can be constructed through dimensional sweeps, resulting
in an $\BigOh(N\log N)$ \cite{Lyon2010,Bruno2010} or $\BigOh(N)$
\cite{Causley_2013,Causley_2013b,Causley_2013c} solver. Furthermore, we extend
the method to higher orders of accuracy by using a novel idea referred to as successive convolution. 
This strategy has recently been developed in \cite{Causley_2013} for the wave
equation by the present authors.  In the present work, we not only extend the method of
lines transpose to parabolic problems, but we recognize the resulting expansion as
a so-called resolvent expansion \cite{abadias2013c_0,grimm2010approximation},
which we leverage to prove stability and convergence of the successive convolution series. 
In addition, we incorporate nonlinear terms with an integrating factor
method that relies on high order Hermite-Birkhoff interpolants as well as  
the (linear) resolvent expansions developed in this paper.


The rest of this paper is organized as follows. In Section \ref{sec:op_calc} we
derive the basic scheme for the one-dimensional heat equation, which is
L-stable and can achieve high orders of accuracy in space and time.  
In Section \ref{sec:high-order-resolvent} we describe how to
obtain an arbitrary order discretization in a single dimension with resolvent
expansions.  In Section \ref{sec:multiple}, we describe how this can be
extended to multiple dimensions, and in Section \ref{sec:Linear} we present
results for linear heat in one and two dimensions.  In Section
\ref{sec:Source}, we describe how our approach can handle nonlinear source terms, and in Section 
\ref{sec:Numerical} we present numerous numerical results including Allen-Cahn
and the Fitzhugh-Nagumo system of equations.
Finally, we draw conclusions and point to future work in Section
\ref{sec:conclusions}.

       \section{First order scheme in one spatial dimension}	\label{sec:op_calc}

We begin by forming a semi-discrete solution to the 1D heat equation using the method of lines transpose (MOL$^T$). Let $u = u(x,t)$ satisfy
\begin{align}
   \label{eqn:heat}
   u_t &= \gamma u_{xx}, \quad (x,t) \in (a,b) \times [0,T],
\end{align}
with constant diffusion coefficient $\gamma$, and appropriate initial and
boundary conditions. The MOL$^T$ amounts to
employing a finite difference scheme for the time derivative, and collocating
the Laplacian term at time levels $t = t^{n}$ and $t=t^{n+1}$. Following a
similar approach from \cite{Causley_2013},
we introduce a free parameter $\beta>0$, so that the collocation has the 
form\footnote{In \cite{Causley_2013}, there are a total of two time
derivatives (and two space), so the right hand side depends on $u^{n+1}, u^n$, and $u^{n-1}$.}
\[
    \frac{u^{n+1}-u^n}{\Delta t} = \gamma\partial_{xx}\left(u^n+\frac{u^{n+1}-u^n}{\beta^2}\right), \qquad \beta>0.
\]
Next, we introduce the differential operator corresponding to the modified Helmholtz equation, defined by
\begin{equation}
    \label{eqn:MHL}
    \mathcal{L} = I- \frac{\partial_{xx}}{\alpha^2},	\quad
    \alpha = \frac{\beta}{\sqrt{\gamma\Delta t}}.
\end{equation}
After some algebra, we find that the scheme can be written as
\begin{equation}
    \label{eqn:First}
    \mathcal{L}[u^{n+1}-(1-\beta^2)u^n] = \beta^2 u^n.
\end{equation}
We note that there are at least two reasonable strategies for choosing $\beta$: 
\begin{enumerate}
    \item {\bf Maximize the order of accuracy.}  For example, if we choose $\beta^2=2$, then the discretization is the trapezoidal
    rule, which is second order accurate and A-stable.
    \item {\bf Enforce stiff decay.}  For example, if we choose $\beta^2=1$, then the discretization is the backward
    Euler scheme, which is first order accurate, L-stable, yet does not
    maximize the order of accuracy.
\end{enumerate}
Here and below, we opt for the second strategy, as the stiff decay of numerical solutions of the heat equation is of paramount importance. In
Section \ref{subsec:Stability}, we develop this discussion in the context
of higher order schemes that relies on a careful selection of $\beta$ as well
as repeated applications of a single inverse operator.

Upon solving equation \eqref{eqn:First} for $u^{n+1}$, we find that the
equation for the update is
\begin{equation}
	\label{eqn:First_update}
	u^{n+1} = (1-\beta^2)u^n + \beta^2\mathcal{L}^{-1}[u^n],
\end{equation}
that requires inverting a modified Helmholtz operator. We accomplish this \textit{analytically} by using Green's functions, from which
\begin{equation}
	\label{eqn:L_inverse}
	\mathcal{L}^{-1}[u] = \left(I- \frac{\partial_{xx}}{\alpha^2}\right)^{-1}[u] := \frac{\alpha}{2}\int_a^b e^{-\alpha|x-y|}u(y)dy + B_a e^{-\alpha(x-a)} + B_b e^{-\alpha(b-x)}.
\end{equation}
The coefficients $B_a$ and $B_b$ are determined by applying
prescribed boundary conditions at $x = a, b$ which we describe in Section
\ref{subsec:homogenous-solution}.

\begin{remark}
Alternatively, had we followed the method of lines (MOL) and first discretized \eqref{eqn:heat} in space,
then the differential operator  $\mathcal{L}$ would be replaced by an algebraic operator $L$, and would be
inverted \textit{numerically}.
\end{remark}

\begin{remark}
Although the update \eqref{eqn:First_update} (with $\beta \neq 1$) is only first order accurate, we describe in Section \ref{sec:high-order-resolvent}
how to extend our procedure to arbitrary order in time.
\end{remark}

This MOL$^T$ approach has several advantages. First, the solution is now explicit, but remains
unconditionally stable. Secondly, in recent work \cite{Causley_2013,Causley_2013b,Causley_2013c} we show that
the convolution integral in Eqn. \eqref{eqn:L_inverse} can be discretized
using a fast $\BigOh(N)$ algorithm,
where $N$ is the number of spatial points. We introduce more details in Section \ref{subsec:High_Space}, 
wherein we update the current algorithm to achieve a user-defined accuracy
of $\BigOh(\Delta x^{M})$ with mesh spacing $\Delta x$.
Finally, since the solution is still continuous in space, we can decouple the
spatial and temporal errors, and by combining resolvent expansions with
dimensional splitting, we extend the method to multiple dimensions without
recoupling the errors.

%

\begin{remark}
Since dimensional splitting is used, all spatial quantities are computed
according to a one-dimensional convolution integral of the form
\eqref{eqn:L_inverse}, which is performed on a line-by-line basis, following
so-called "dimensional sweeps". Since the discrete convolution is computed in
$\BigOh(N)$ complexity, the full solver scales linearly in the number of
spatial points (assuming each sweep is performed in parallel). 
\end{remark}

A fully discrete scheme is obtained after a spatial discretization of
\eqref{eqn:L_inverse}. The domain $[a,b]$ is partitioned into $N$ subdomains
$[x_{j-1},x_j]$, with $a = x_0<x_1< \ldots x_N = b$. The convolution operator
is comprised of a particular solution, which is defined by the convolution
integral 
\begin{equation}
   \label{eqn:I_def}
   I[u](x): = \frac{\alpha}{2} \int_a^b e^{-\alpha|x-y|}u(y) dy
\end{equation}
and a homogeneous solution
\begin{equation} \label{eqn:homogeneous_def}
	B_a e^{-\alpha(x-a)} + B_b e^{-\alpha(b-x)},
\end{equation}
both of which can be constructed in $\BigOh(N)$ operations using fast convolution.
We now describe each of these in turn, starting with the first.

       \subsection{Spatial discretization of the particular solution} \label{subsec:High_Space}

The particular solution is first split into $I[u](x) = I^L(x) + I^R(x)$, where
\[
   I^L(x) = \frac{\alpha}{2} \int_a^x e^{-\alpha(x-y)} u(y)  dy, \quad I^R(x) = \frac{\alpha}{2} \int_x^b e^{-\alpha(y-x)} u(y)  dy.
\]
Each of these parts independently satisfy the first order "initial value problems"
\begin{align} \notag
   (I^L)'(y) + \alpha I^L(y) = \frac{\alpha}{2} u(y)&, \quad a < y < x, \quad I^L(a) = 0, \\ \notag
   (I^R)'(y) - \alpha I^R(y) = -\frac{\alpha}{2} u(y)&, \quad x < y < b, \quad I^R(b) = 0, 
\end{align} 
where the prime denotes spatial differentiation. By symmetry, the scheme
for $I^R$ follows from that of $I^L$, which we describe. From the
integrating factor method, the integral satisfies the following identity,
known as exponential recursion
\[
   I^L(x_j) = e^{-\nu_j}I^L(x_{j-1}) + J^L(x_j), \quad \text{where} \quad J^L(x_j) = \frac{\alpha}{2} \int_{x_{j-1}}^{x_j}  e^{-\alpha(x_j-y)} u(y) dy,
\]
and
\[
   \nu_j = \alpha h_j, \qquad h_j = x_j-x_{j-1}.
\]
This expression is still exact, and indicates that only the "local" integral $J^L$ needs to be approximated. We therefore consider a projection of $u(y)$ onto $P_M$, the space of polynomials of degree $M$.  A local approximation
\[
   u(x_j-z h_j) \approx p_j(z), \quad z \in [0,1],
\]
is accurate to $\BigOh(h_j^{M})$, and defines a quadrature of the form
\begin{equation}
   \label{eqn:JL}
   J^L(x_j)  = \frac{\nu_j}{2} \int_{0}^{1} e^{-\nu_j z} u(x_j - h_jz) dz\approx \frac{\nu_j}{2} \int_0^1 e^{-\nu_j z} p_j(z) dz.
\end{equation}
If standard Lagrange interpolation is used, then the polynomials can be factorized as
\begin{equation}
   \label{eqn:pj}
   p_j(z) = \sum_{k=-\ell}^r p_{jk}(z) u_{j+k}  = z^T A_j^{-1} u^M_j,
\end{equation}
where $z = [1,z,\ldots, z^M]^T$, and $u^M_j = [u_{j-\ell},\ldots, u_j, \ldots, u_{j+r}]^T$, and $A_j$ is the Vandermonde matrix corresponding to the points $x_{j+k}$, for $k=-\ell \ldots r$. The values of $\ell$ and $r$ are such that $\ell+r=M+1$, and are centered about $j$ except near the boundaries, where a one-sided stencil is required.

Substituting this factorization into \eqref{eqn:JL} and integrating against an exponential, we find that
\[
   J^L(x_j) \approx J^L_j := \sum_{k=-\ell}^r w_{jk} u_{j+k},
\]
where the weights $w_j = [w_{j,-\ell},\ldots w_{j,r} ]$ satisfy
\begin{equation}
   \label{eqn:weights}
   w_{j}^T = \phi_j^T A_j^{-1}
\end{equation}
and where
\[
   \phi_{jk} = \frac{\nu_j}{2}\int_0^1 e^{-\nu_j z} z^k dz = \frac{k!e^{-\nu_j}}{2\nu_j}\left(e^{\nu_j}-\sum_{p=0}^k \frac{(\nu_j)^p}{p!}\right).
\]
If the weights are pre-computed, then the fast convolution algorithm scales as
$\BigOh(MN)$ per time step, and achieves a user-defined $\BigOh(M)$ in space. In
every example shown in this work, we choose $M=2$ or $M=4$.

\subsection{Homogeneous solution}
\label{subsec:homogenous-solution}

The homogeneous solution in \eqref{eqn:homogeneous_def} is used to enforce boundary
conditions. We first observe that all dependence on $x$ in the convolution
integral, $I[x] := I[u^n](x)$, in  \eqref{eqn:I_def} is on the Green's function,
which is a simple exponential function. Through direct differentiation, we obtain
%
\begin{equation} \label{eqn:derivative_integral}
    I_x(a) = \alpha I(a), \quad I_x(b) = -\alpha I(b).
\end{equation}
Various boundary conditions at $x=a$ and $x=b$ can be enforced by solving a
simple $2\times2$ system for the unknowns $B_a$ and $B_b$. For example, for periodic boundary
conditions we assume that (at each discrete time step, $t=t^n$)
\begin{equation} \label{eqn:periodic_assumption}
    u^n (a) = u^n (b), \quad u_x^n(a) = u_x^n(b), \qquad  \forall n \in \mathbb{N}.
\end{equation}
We next enforce this assumption to hold on the scheme \eqref{eqn:L_inverse},
\begin{align} \notag
\mathcal{L}^{-1}[u^n](a)  = \mathcal{L}^{-1}[u^n](b) &\quad \Longleftrightarrow \quad I(a) + B_a + B_b\mu = I(b) + B_a\mu + B_b, \\ \notag 
\mathcal{L}^{-1}_x[u^n](a)  = \mathcal{L}^{-1}_x[u^n](b) &\quad \Longleftrightarrow \quad \alpha \left(I(a) - B_a + B_b\mu \right)= \alpha \left( -I(b) - B_a\mu + B_b \right),
\end{align}
where $\mu = e^{-\alpha(b-a)}$ and the identities \eqref{eqn:derivative_integral} are used to find $\mathcal{L}_x^{-1}$. Solving this linear system yields
\begin{equation} \label{eqn:periodic_coefficient}
B_a = \dfrac{I(b)}{1- \mu}, \quad B_b = \dfrac{I(a)}{1- \mu}.
\end{equation}
Different boundary conditions (e.g. Neumann) follow an analogous
procedure that requires solving a simple $2\times2$ linear system for $B_a$ and
$B_b$.

        \section{Higher order schemes from resolvent expansions}
        \label{sec:high-order-resolvent}
In our recent work \cite{Causley_2013}, we apply a successive convolution approach 
to derive high order A-stable solvers for the wave equation. 
The key idea is to recognize the fact that, in view of
the modified Helmholtz operator \eqref{eqn:MHL}, the second derivative can be
factored as
\begin{equation}
    \label{eqn:Lap_sc}
    \left(-\frac{\partial_{xx}}{\alpha^2}\right) = \mathcal{L}-I = \mathcal{L}\left(I-\mathcal{L}^{-1}\right) :=\mathcal{L}\mathcal{D},
\end{equation}
where
\begin{equation}
    \label{eqn:D_def}
    \mathcal{D} = I-\mathcal{L}^{-1}, \qquad \mathcal{L} = \left(I-\mathcal{D}\right)^{-1}.
\end{equation}
Substitution of the second expression into \eqref{eqn:Lap_sc} determines the second derivative completely in terms of
this new operator
\begin{equation}
    \label{eqn:Lap_D}
    \left(-\frac{\partial_{xx}}{\alpha^2}\right) =
    \left(I-\mathcal{D}\right)^{-1}\mathcal{D} = \sum_{p=0}^\infty \mathcal{D}^p.
\end{equation}
This shows that second order partial derivatives of a sufficiently smooth
function $u(x)$ can be approximated by truncating a resolvent expansion based
on successively applying $\mathcal{D}$ to $u(x)$, which is a linear
combination of successive convolutions \eqref{eqn:L_inverse}.


Now, we consider a solution $u(x,t)$ to the heat equation \eqref{eqn:heat},
that for simplicity we take to be infinitely smooth.  We perform a Taylor expansion
on $u(x,t+\Delta t)$, and then use the Cauchy-Kovalevskaya procedure
\cite{seal2014picard,seal2014high} to exchange temporal and spatial
derivatives to yield
\begin{equation}
    \label{eqn:Taylor_t}
    u(x,t+\Delta t) = \sum_{p=0}^\infty \frac{(\Delta t \partial_t)^p}{p!}u(x,t) = \sum_{p=0}^\infty \frac{(\gamma \Delta t \partial_{xx})^p}{p!}u(x,t) =: e^{\gamma\Delta t \partial_{xx}} u(x,t).
\end{equation}

The  term $e^{\gamma\Delta t \partial_{xx}}$ is a spatial pseudo-differential
operator, and it compactly expresses the full Taylor series. Our goal is to make
use of the formula \eqref{eqn:Lap_D} to convert the Taylor series into a
resolvent expansion.  This can be performed term-by-term, and requires
rearranging a doubly infinite sum. However, if we instead work directly with
the operator defining the Taylor series, then
\[
    e^{\gamma \Delta t \partial_{xx} } =  e^{-\beta^2\left(-\frac{\partial_{xx}}{\alpha^2}\right)} =e^{-\beta^2\left(I-\mathcal{D}\right)^{-1}\mathcal{D}}.
\]
At first glance this expression looks quite unwieldy. However, fortune is on our side, since the generating
function of the \textit{generalized Laguerre} polynomials
$L^{(\lambda)}_p(z)$ is
\begin{equation}
    \label{eqn:Laguerre0}
    \sum_{p=0}^\infty  L^{(\lambda)}_p(z) t^p = \frac{1}{(1-t)^{\lambda+1}} e^{- \frac{t z}{1-t} },
\end{equation}
which bears a striking resemblance to our expansion. Indeed, if we take $\lambda=-1$, substitute $z=\beta^2$ and $t = \mathcal{D}$, then
\begin{equation}
    \label{eqn:Laguerre}
    e^{-\beta^2\left(I-\mathcal{D}\right)^{-1}\mathcal{D}} = \sum_{p=0}^\infty  L^{(-1)}_p(\beta^2) D^p
    = I+\sum_{p=1}^\infty L^{(-1)}_p(\beta^2) D^p.
\end{equation}

\subsection{Convergence}

This expansion has been considered in the context of $C_0-$ semigroups
\cite{abadias2013c_0,grimm2010approximation}, where $\left(-\frac{\partial_{xx}}{\alpha^2}\right)$
is replaced with a general closed operator $A$ on a Hilbert space $X$. In our notation, we
restate part $(ii)$ of Theorem 4.3 in \cite{abadias2013c_0}, which is proven therein.
\begin{theorem}
    Let the $C_0-$ semigroup
    \begin{align*}
        T(\beta^2)&=e^{-\beta^2\left(-\frac{\partial_{xx}}{\alpha^2}\right)} 
        = \sum_{p=0}^\infty L^{(-1)}_p(\beta^2) \left(I-\frac{\partial_{xx}}{\alpha^2}\right)^{-p} \left(-\frac{\partial_{xx}}{\alpha^2}\right)^p \\
        &= \sum_{p=0}^\infty L^{(-1)}_p(\beta^2) \mathcal{L}^{-p} \left(\mathcal{L}-I\right)^p 
         =\sum_{p=0}^\infty L^{(-1)}_p(\beta^2) D^p
    \end{align*}
    be approximated by
    \[
        T_P(\beta^2)= \sum_{p=0}^P L^{(-1)}_p(\beta^2) D^p.
    \]
    Then, for $u(x) \in C^{2P+2}$, there exists for each $\beta^2>0$ an integer $m_0$ such that for all integers $2\leq k \leq P+1$, with $P \geq m_0$,
    \[
        \left\|T(\beta^2)u - T_P(\beta^2) u\right\|\leq \frac{C(\beta^2,k)}{P^{k/2-1}}\left\|\left(-\frac{\partial_{xx}}{\alpha^2}\right)^k u\right\|,
    \]
where $C(\beta^2,k)$ is a constant that depends only on $\beta^2$ and $k$.
\end{theorem}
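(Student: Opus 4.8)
The plan is to reduce the operator statement to a scalar estimate and then control the tail of the Laguerre series. In our application $A = -\partial_{xx}/\alpha^2$ is self-adjoint and nonnegative (with the boundary conditions of Section~\ref{subsec:homogenous-solution}), so the spectral theorem lets me replace every operator function by the corresponding scalar function on the spectrum $\sigma(A)\subseteq[0,\infty)$. Writing $E_\lambda$ for the spectral resolution of $A$, and noting that $\mathcal{D}=A(I+A)^{-1}$ has symbol $\xi:=\lambda/(1+\lambda)\in[0,1)$ while $A$ has symbol $\lambda$, the generating-function identity \eqref{eqn:Laguerre} identifies the truncation remainder as the Laguerre tail, giving
\[
    \|T(\beta^2)u - T_P(\beta^2)u\|^2 = \int_{\sigma(A)} |R_P(\xi(\lambda))|^2\, d\|E_\lambda u\|^2, \qquad R_P(\xi):=\sum_{p=P+1}^\infty L_p^{(-1)}(\beta^2)\,\xi^p .
\]
For a general closed $A$ one would instead represent $R_P(\mathcal{D})$ through the holomorphic functional calculus and estimate the resolvent, but the self-adjoint reduction is cleanest here. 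The theorem then follows once I establish the pointwise scalar estimate
\[
    \frac{|R_P(\xi)|}{\lambda^k} \le \frac{C(\beta^2,k)}{P^{k/2-1}}, \qquad \lambda=\frac{\xi}{1-\xi},\ \ \xi\in[0,1),
\]
since pulling $\lambda^k$ outside the integral reproduces $\|A^k u\|$ on the right.

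Next I would split the range of $\xi$. For $\xi$ bounded away from $1$ the factor $\lambda^{-k}$ is harmless and $R_P$ is exponentially small in $P$: since the series starts at $p=P+1$, reindexing gives $|R_P(\xi)|\le \xi^{P+1}\sum_{q\ge 0}|L_{P+1+q}^{(-1)}(\beta^2)|\,\xi^{q}$, and for $\xi\le\xi_0<1$ the geometric decay dominates the (mild) growth of the coefficients. The delicate region is $\xi\to 1$, equivalently $\lambda\to\infty$, where $\lambda^{-k}$ supplies a factor $(1-\xi)^k$ and I must show $R_P$ does not blow up too fast. Here I would invoke the Fej\'er--Perron asymptotics of Laguerre polynomials for fixed argument, $L_p^{(-1)}(\beta^2)=O(p^{-3/4})$ with an oscillatory phase $\cos(2\sqrt{p\beta^2}-\tfrac{\pi}{4})$, so that the tail is a slowly decaying but strongly oscillatory series. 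Summation by parts against this oscillation, combined with the $(1-\xi)^k$ supplied by $\lambda^{-k}$, is what upgrades the crude $p^{-3/4}$ decay into a usable bound.

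The main obstacle is precisely extracting the exponent $k/2-1$: the naive bound from $|L_p^{(-1)}(\beta^2)|\lesssim p^{-3/4}$ and $\sum_{p>P}\xi^p\lesssim(1-\xi)^{-1}$ is far too lossy, and one must exploit cancellation in the oscillatory tail. I expect the sharp route to be a saddle-point analysis: using $L_p^{(-1)}(\beta^2)=\frac{1}{2\pi i}\oint t^{-p-1}e^{-\beta^2 t/(1-t)}\,dt$ from the $\lambda=-1$ generating function, one writes
\[
    R_P(\xi)=\frac{1}{2\pi i}\oint \frac{\xi^{P+1}\,e^{-\beta^2 t/(1-t)}}{t^{P+1}\,(t-\xi)}\,dt ,
\]
and deforms onto the steepest-descent contour of the phase $\phi(t)=-\beta^2 t/(1-t)-(P+1)\log t$. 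Balancing the (complex) saddle against $\xi$ yields the natural scale $P^{-1/2}$, and each additional factor $\lambda^{-1}\sim(1-\xi)$ that one spends improves the exponent by $1/2$, which is what produces the $k/2$. The constraints $2\le k\le P+1$ and $P\ge m_0$ mark exactly where this balance is valid: $k\ge 2$ guarantees at least one net half-power after the oscillation is absorbed (so $k/2-1\ge 0$ and the error is at least bounded), while $k\le P+1$ keeps the order-$(P+1)$ zero of $R_P$ at $\xi=0$ able to absorb the $\lambda^{-k}$ factor there. Assembling the two regions and integrating against $d\|E_\lambda u\|^2$ completes the estimate.
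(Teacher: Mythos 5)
Your outline is sound, but note first that the paper does not actually prove this theorem: it restates part (ii) of Theorem 4.3 of \cite{abadias2013c_0}, where the result is established for a general closed generator of a bounded $C_0$-semigroup via functional-calculus representations and $L^1$-type estimates on Laguerre functions. Your route is therefore genuinely different, and in the paper's concrete setting ($A=-\partial_{xx}/\alpha^2$ self-adjoint and nonnegative under the boundary conditions of Section \ref{subsec:homogenous-solution}) it is also more elementary: the spectral theorem reduces everything to the scalar tail $R_P(\xi)=\sum_{p>P}L_p^{(-1)}(\beta^2)\xi^p$ with $\xi=\lambda/(1+\lambda)$, exactly as you set it up. What you lose is generality --- the cited result does not assume self-adjointness, and your argument is tied to the $L^2$ norm, whereas the paper reports its errors in $L^\infty$; what you gain is transparency. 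On the hard step, the saddle-point machinery you propose is more than is needed: Fej\'er's asymptotic $|L_p^{(-1)}(\beta^2)|\le C(\beta^2)\,p^{-3/4}$ together with a single summation by parts against the oscillation $\cos\bigl(2\sqrt{p\beta^2}+\phi_0\bigr)$ (whose partial sums over $P<p\le Q$ are $O(\sqrt{Q})$ by the Kusmin--Landau/van der Corput first-derivative test, valid once $p\gtrsim\beta^2$ --- which is exactly where $m_0(\beta^2)$ enters) already yields the uniform bound $|R_P(\xi)|\le C(\beta^2)P^{-1/4}$ on $\xi\in[0,1]$. Splitting at the critical scale $\lambda\sim\sqrt{P}$ then closes the scalar estimate: for $\lambda\ge\sqrt{P}$ each factor $\lambda^{-1}$ contributes $P^{-1/2}$ and the uniform bound supplies more than the remaining power; for $1\le\lambda\le\sqrt{P}$ one has $\xi^{P+1}\le e^{-c\sqrt{P}}$, which beats any power of $P$ with a $k$-dependent constant; and for $\lambda\le1$ the order-$(P+1)$ zero of $R_P$ at $\xi=0$ absorbs $\lambda^{-k}$ precisely because $k\le P+1$. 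Two small points to make explicit if you write this up: the interchange of the Laguerre series with the spectral integral should be justified by that same uniform tail bound, and the case $\lambda=0$ (constant modes under periodic boundary conditions) is covered because $R_P(0)=0$.
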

\begin{remark}
The salient point of the theorem is that, in consideration of $\alpha$ 
(c.f.  Eqn. \eqref{eqn:MHL}), the approximation error is of the form
$C \Delta t^{P+1} \left\|u^{(2P+2)}(x)\right\|$, which matches the form given by a typical Taylor method.
\end{remark}

Finally, we truncate the resolvent expansion \eqref{eqn:Laguerre} at order
$p=P$.  For the heat equation, this defines the numerical method as
\begin{equation}
    \label{eqn:Scheme_D}
    u(x,t+\Delta t) = u(x,t)+\sum_{p=1}^P L^{(-1)}_p(\beta^2) \mathcal{D}^p[u](x,t),
\end{equation}
which has a truncation error of the form
\begin{equation}
    \label{eqn:Trunc}
    \tau:=  L^{(-1)}_{P+1}(\beta^2) \mathcal{D}^{P+1}[u](x,t) + \BigOh(\Delta t^{P+2}).
\end{equation}
For $P=1,2,3$, these schemes (evaluated at $t = t^n$) are
\begin{align}
	u^{n+1} &= u^n - \beta^2\mathcal{D}[u^n], \\
	u^{n+1} &= u^n - \beta^2\mathcal{D}[u^n]-\left(\beta^2-\frac{\beta^4}{2}\right)\mathcal{D}^2[u^n], \\
	u^{n+1} &= u^n - \beta^2\mathcal{D}[u^n]-\left(\beta^2-\frac{\beta^4}{2}\right)\mathcal{D}^2[u^n]-\left(\beta^2-\beta^4+\frac{\beta^6}{6}\right)\mathcal{D}^3[u^n].
\end{align}
We note that for implementation, each operator is applied successively, and is defined by
\[
    \mathcal{D}^{(p+1)}[u] := \mathcal{D}[\mathcal{D}^{p}[u]], \quad \mathcal{D}^{0}[u] := u.
\]

\subsection{Stability}
\label{subsec:Stability}

%
There remains one critical issue: the choice of the free parameter $\beta$. 
In 1974, N{\o}rsett studied a similar single-step multiderivative
method for the heat equation \cite{Norsett1974}
and he too, had a free parameter in his solver.
We follow his lead on the Von-Neumann analysis based on his MOL
discretization, but in this work we optimize $\beta$ to obtain stiff decay,
whereas N{\o}rsett chose $\beta$ to maximize the order of accuracy of the
solver.

Consider the linear test problem
\[
    \frac{dy}{dt} = \lambda y, \quad \quad y(0) = 1, \quad \lambda \in \mathbb{C}, 
\]
whose exact solution $y(t)$ satisfies
\[
    y(t+h) = e^z y(t), \quad z = h\lambda \in \mathbb{C}.
\]
Application of \eqref{eqn:Scheme_D} to this test problem results in
\[
    y(t+h) = \sum_{p=0}^P L^{(-1)}_p(\beta^2) \hat{D}^p y(t) = \phi(z)y(t)
\]
where
\[
    \hat{D} = \frac{-(z/\beta^2)}{1-(z/\beta^2)} = 1-\left(1-(z/\beta^2)\right)^{-1}.
\]
The generalized Laguerre polynomials satisfy many identities, the following of
which is the most pertinent:
\begin{equation}
     \label{eqn:Laguerre_Identity}
     L^{(0)}_{p+1}(x)-L^{(0)}_{p}(x) = L^{(-1)}_{p+1}(x) = \left(\frac{x}{p+1}\right)\frac{d}{dx}L^{(0)}_{p+1}(x).
\end{equation}
Here, $L^{(0)}_p(x)$ is the standard Laguerre polynomial $L_p(x)$.
Following standard definitions, we say that a numerical scheme is
\textit{$A$-stable}, provided $|\phi|\leq 1$ in the left-half of the complex
plane $z\in \mathbb{C}^-$. Likewise, a scheme exhibits \textit{stiff decay} if
$\phi(z) \to 0$ as $Re(z) \to -\infty$. If an $A$-stable method also exhibits
stiff decay, it is $L$-stable.

Now, observing that $\hat{D} \to 1$ as $Re(z) \to -\infty$, we find that
\begin{equation}
    \label{eqn:limit}
    \lim_{z\to -\infty} \phi(z)  = \sum_{p=0}^P L^{(-1)}_p(\beta^2) = L^{0}_0(\beta^2)+\sum_{p=1}^P \left(L^{0}_p(\beta^2)-L^{0}_{p-1}(\beta^2)\right) =L^{0}_P(\beta^2),
\end{equation}
where we have used the first two expressions in \eqref{eqn:Laguerre_Identity}
to introduce a telescoping sum. We are now prepared to prove the following.
\begin{theorem}
    Let $u(x,t)$ be an approximate solution to the heat equation \eqref{eqn:heat}, given by the successive convolution scheme \eqref{eqn:Scheme_D}. Then,
    \begin{enumerate}
        \item If $\beta^2=x_1^{(P)}$ is chosen as the smallest root of $L'_{P+1}(x) = (L^{(0)}_{P+1}(x))'$, then the scheme achieves order $P+1$, but does not exhibit stiff decay.
        \item If $\beta^2 = x_1^{(P)}$ is chosen as the smallest root of $L_{P}(x) = L^{(0)}_{P}(x)$, then the scheme achieves order $P$, and exhibits stiff decay.
        \item Following the first strategy, the schemes are $A$-stable for $P=1,2,3$, whereas the second strategy ensures $L$-stability. For both strategies, $A(\alpha)$-stability is achieved for $P>3$, with large values of $\alpha \approx \pi/2$.
    \end{enumerate}
\end{theorem}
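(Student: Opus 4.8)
The plan is to route all three parts through the scalar amplification factor of the test problem, $\phi(z)=\sum_{p=0}^P L^{(-1)}_p(\beta^2)\hat D^{\,p}$, regarded as a polynomial $\Phi$ in the single variable $\hat D=\hat D(z)=\frac{-z/\beta^2}{1-z/\beta^2}$. The three claims then decouple cleanly: an \emph{order} statement (matching $\phi$ to $e^z$ for small $z$), a \emph{stiff-decay} statement (behavior of $\phi$ as $\mathrm{Re}(z)\to-\infty$, governed by \eqref{eqn:limit}), and an \emph{A-stability} statement ($|\phi|\le1$ on $\mathbb{C}^-$). The unifying algebraic tool is \eqref{eqn:Laguerre_Identity} together with the classical Laguerre relation $xL_{n}'(x)=n\bigl(L_n(x)-L_{n-1}(x)\bigr)$, which is exactly what ties the two candidate choices of $\beta^2$ to the coefficients controlling order and decay.

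For the order, I would start from the truncation error \eqref{eqn:Trunc}. Since $\mathcal D=I-\mathcal L^{-1}$ with $\mathcal L=I-\partial_{xx}/\alpha^2$ and $\alpha^2=\beta^2/(\gamma\Delta t)$, the operator $\mathcal D$ has leading symbol $-\gamma\Delta t\,\partial_{xx}/\beta^2$ and is $\BigOh(\Delta t)$, so $\mathcal D^{P+1}=\BigOh(\Delta t^{P+1})$ and the scheme is generically of order $P$. By \eqref{eqn:Laguerre_Identity}, $L^{(-1)}_{P+1}(\beta^2)=\tfrac{\beta^2}{P+1}L'_{P+1}(\beta^2)$, which vanishes precisely when $\beta^2$ is a root of $L'_{P+1}$; choosing that root annihilates the leading error term, giving $\tau=\BigOh(\Delta t^{P+2})$ and order $P+1$, which is part~1. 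For stiff decay I use \eqref{eqn:limit}, namely $\lim_{\mathrm{Re}(z)\to-\infty}\phi(z)=L_P(\beta^2)$. Under strategy~2 this limit is $0$ by construction, yielding stiff decay and part~2. Under strategy~1, evaluating $xL'_{P+1}=(P+1)(L_{P+1}-L_P)$ at a root of $L'_{P+1}$ gives $L_P(\beta^2)=L_{P+1}(\beta^2)$; since consecutive Laguerre polynomials share no common zero (immediate from the three-term recurrence), $L_P(\beta^2)\neq0$, so strategy~1 does \emph{not} decay---exactly the asserted dichotomy.

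For the stability claims in part~3 I would exploit that $\hat D$ is a M\"obius transformation. A short computation shows $\hat D$ maps $\mathbb{C}^-$ bijectively onto the open disk $\{|\hat D-\tfrac12|<\tfrac12\}$, with the imaginary axis mapping to the boundary circle. Because $\Phi$ is a polynomial, the maximum modulus principle reduces A-stability, $|\Phi(\hat D)|\le1$ on this disk, to the boundary inequality $|\Phi(\hat D)|\le1$ for $\hat D=\tfrac12+\tfrac12 e^{i\theta}$. For $P=1$ this recovers exactly the trapezoidal factor $\frac{1+z/2}{1-z/2}$ (strategy~1, $\beta^2=2$) and the backward-Euler factor $\frac{1}{1-z}$ (strategy~2, $\beta^2=1$), so A- and L-stability are immediate; for $P=2,3$ the boundary inequality becomes a one-variable trigonometric inequality in $\theta$ that I would verify directly using the numerical values of the chosen roots $\beta^2=x_1^{(P)}$. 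Combining A-stability under strategy~2 with the stiff decay already established upgrades it to L-stability.

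The main obstacle is precisely this boundary verification: even after reducing to the circle $|\hat D-\tfrac12|=\tfrac12$, one must show the degree-$P$ polynomial satisfies $|\Phi|^2-1\le0$ along the \emph{whole} circle, and it is exactly the specific selection of $\beta^2$ (and the role of taking the smallest admissible root) that keeps this inequality valid though nearly tight. For $P>3$ full A-stability fails, so I would instead characterize the largest sector $\{|\arg(-z)|<\alpha\}$ on which $|\phi|\le1$ by tracking the sign of $|\Phi|^2-1$ along the image circle and pulling the admissible arc back through the M\"obius map; this yields $A(\alpha)$-stability with $\alpha$ close to $\pi/2$, consistent with the stated numerical observation.
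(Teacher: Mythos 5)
Your proposal follows essentially the same route as the paper's proof: the truncation error \eqref{eqn:Trunc} combined with the Laguerre identity \eqref{eqn:Laguerre_Identity} for the order claims, the limit \eqref{eqn:limit} for the stiff-decay dichotomy, and a maximum-modulus reduction to the imaginary axis with case-by-case verification for $P=1,2,3$ (which the paper carries out numerically in Figure \ref{fig:Stab}, just as you ultimately propose to do). The one genuine addition is your justification that $L_P(\beta^2)\neq 0$ under strategy 1 --- via $xL'_{P+1}(x)=(P+1)\bigl(L_{P+1}(x)-L_P(x)\bigr)$ and the fact that consecutive Laguerre polynomials share no common zero --- a claim the paper merely asserts.
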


\begin{proof}
The proof follows by applying the maximum modulus principle coupled with
\eqref{eqn:limit}. For part 1, upon examining the truncation error
\eqref{eqn:Trunc}, we see that an additional order of accuracy can be
gained if we choose
\[
    L^{(-1)}_{P+1}(\beta^2) = \left(\frac{\beta^2}{P+1}\right)L'_{P+1}(\beta^2) = 0.
\]
However, $L_{P}(\beta^2)\neq 0$ for this choice, and so stiff decay does not
hold. For part 2, we instead enforce stiff decay, but then the truncation
error is of order $P$. Finally, part 3 is demonstrated by the maximum
amplification factors $\phi$ along the imaginary axis, as shown for both
strategies in Figure \ref{fig:Stab}. In particular, we observe that
$|\phi(iy)|\leq 1$ for $P=1,2,3$.
\end{proof}

\begin{figure}[h!]
    \subfloat[Maximal Order]{\includegraphics[width=0.48\linewidth]{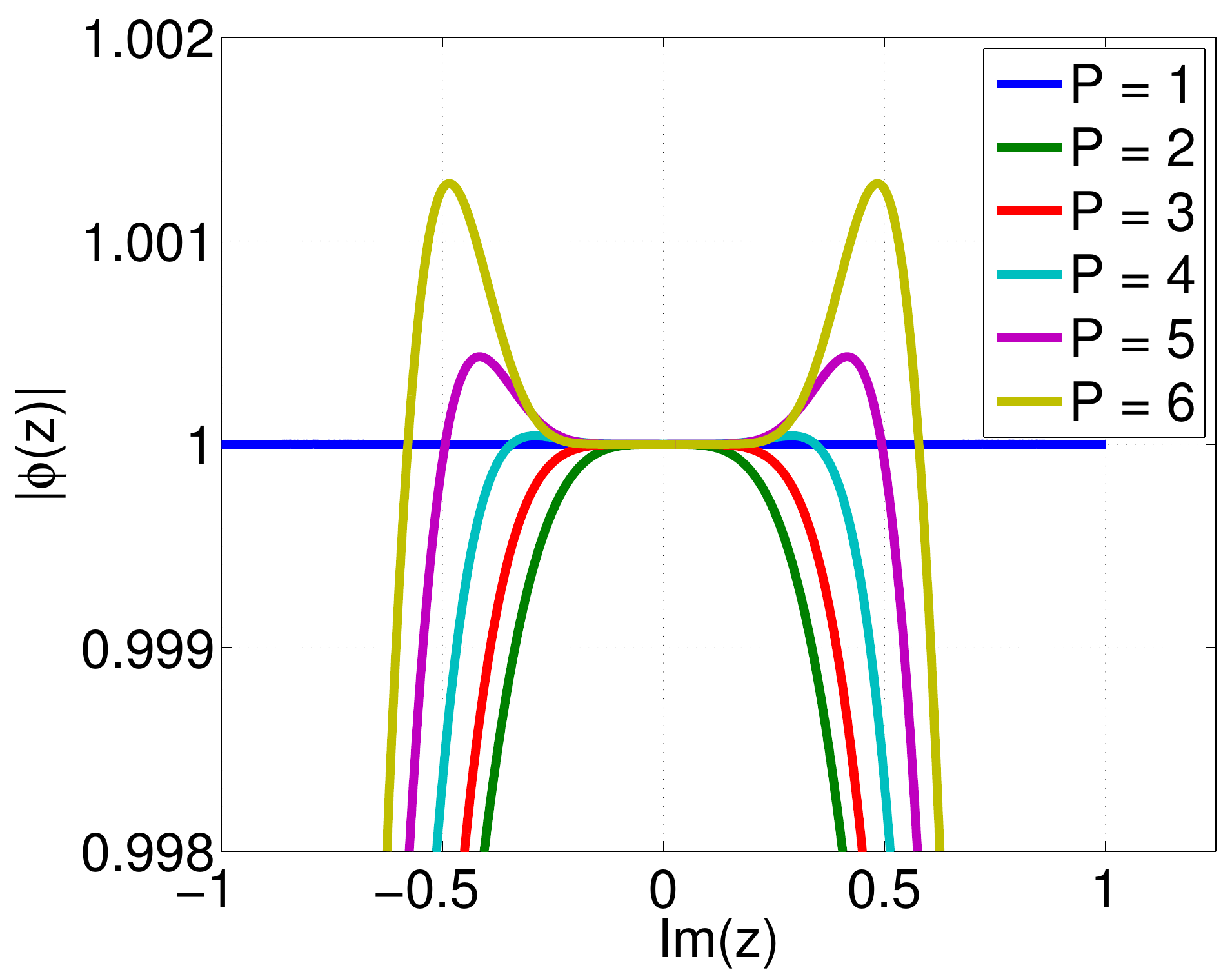}}
    \hspace{.02\linewidth}
    \subfloat[Stiff Decay]{\includegraphics[width=0.48\linewidth]{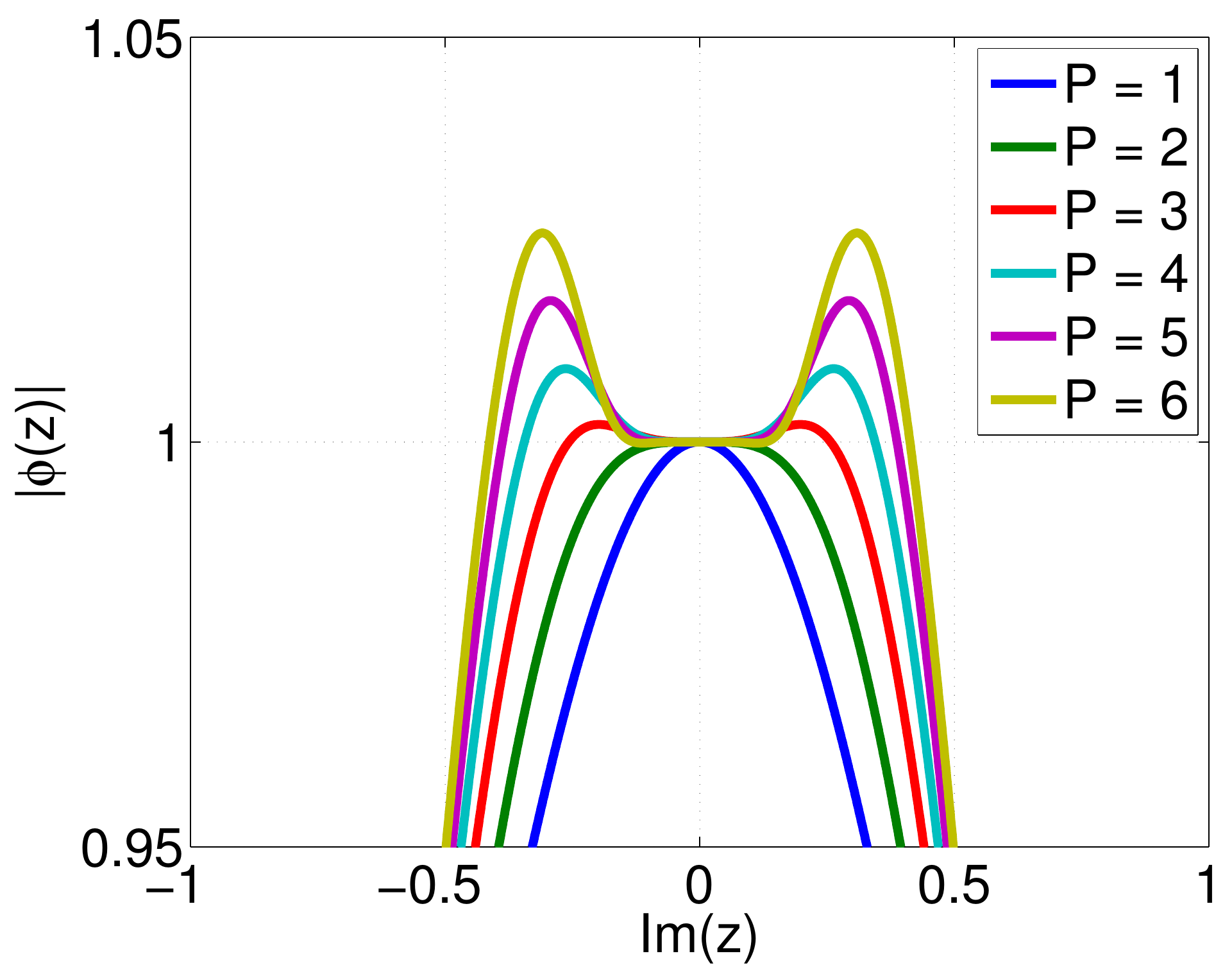}}
    \caption{Maximum amplification factors $|\phi(iy)|$ for the first few
    orders $P$, with (a) maximal order, or (b) stiff decay. When maximizing order, the first 3 schemes exhibit $A$-stability,
    whereas ensuring stiff decay leads to $L$-stable schemes. For $P>3$, both schemes become $A(\alpha)$-stable.
    }
    \label{fig:Stab}
\end{figure}
\begin{remark}
    In \cite{Norsett1974}, the scheme was chosen to maximize the order of accuracy, implicitly leading to eliminating the first term in the truncation error \eqref{eqn:Trunc}, which is equivalent to the first strategy. However, in this work we follow the second strategy, and choose $\beta^2$ as the smallest root of $L_P(x)$ to ensure stiff decay.
\end{remark}

For comparison we record the values of $\beta^2$ chosen for each order $1\leq
P \leq 6$, to those of N{\o}rsett in Table \ref{tab:LagZeros}.  For all of our
solvers, we choose $\beta$ to be the largest possible value that still yields
provable stiff decay.

\begin{table}[htbp]
\begin{center}
        \caption{Values of $\beta^2$ chosen for orders $P=1,2,\ldots 6$. The
        first column are those used in our schemes, and uniquely guarantee
        stiff decay and $A(0)$-stability. For comparison, we also display the
        values in N{\o}rsett \cite{Norsett1974} which give optimal order
        $P+1$, at the expense of stiff decay.  \label{tab:LagZeros}
        }
        \begin{tabular}{|c||c|c||c|c|}
            \hline 
                & \multicolumn{2}{c||}{Stiff Decay}& \multicolumn{2}{c|}{Maximal Order} \\
            \hline
            $P$	& $\beta^2$ & $L_P(\beta^2)$       & $\beta^2$ & $L_P(\beta^2)$   	    \\
            \hline
            $1$	& 1.0000	& 0			           & 2.0000    & -1.0000                \\
            \hline
            $2$	& 0.5858	& 0			           & 1.2679    & -0.7320                \\
            \hline
            $3$	& 0.4158	& 0			           & 0.9358    & -0.6304                \\
            \hline
            $4$	& 0.3225	& 0			           & 0.7433    & -0.5768                \\
            \hline
            $5$	& 0.2636	& 0			           & 0.6170    & -0.5436                \\
            \hline
            $6$	& 0.2228	& 0			           & 0.5277    & -0.5211                \\
            \hline
        \end{tabular}
\end{center}
\end{table}

%
%

\section{Resolvent expansions for multiple spatial dimensions}
        \label{sec:multiple}

We extend the 1D solver to multiple spatial dimensions through the use of
dimensional splitting. Our key observation is that we can use the
factorization property of the exponential to perform the series expansion. For
instance, in three dimensions, we have
\begin{equation}
    e^{\gamma\Delta t \nabla^2} = e^{-\beta^2\left(-\frac{\partial_{xx}}{\alpha^2}\right)}e^{-\beta^2\left(-\frac{\partial_{yy}}{\alpha^2}\right)}e^{-\beta^2\left(-\frac{\partial_{zz}}{\alpha^2}\right)}.
\end{equation}
Now, we first replace each term with the identity \eqref{eqn:Laguerre}
dimension by dimension, and then truncate the expansions which will be in
terms of the univariate operators $\mathcal{L}_\gamma^{-1}$ and
$\mathcal{D}_\gamma$ for $\gamma = \{x,y,z\}$ as defined by \eqref{eqn:MHL},
and \eqref{eqn:Scheme_D} acting on a function $u^n(x,y,z)$. This infinite sum
with three indices must then be truncated to order $P$, and after a change of
indices we find
\begin{equation}
\label{eqn:EPL_3}
    E_P = \sum_{p,q,r} \binom{P-1}{p,q,r} L^{(-1)}_p(\beta^2)L^{(-1)}_q(\beta^2)L^{(-1)}_r(\beta^2)\mathcal{D}_x^p \mathcal{D}_y^q\mathcal{D}_z^r,
\end{equation}
in 3D, with the corresponding 2D operator given by
\begin{equation}
\label{eqn:EPL_2}
    E_P = \sum_{p,q} \binom{P-1}{p,q} L^{(-1)}_p(\beta^2)L^{(-1)}_q(\beta^2)\mathcal{D}_x^p \mathcal{D}_y^q.
\end{equation}
Here we adopt the notation that sums are taken over all non-negative indices
that sum to $P$, and the multinomial coefficients are defined such that
$\binom{n}{p,q,r} = \frac{n!}{p!q!r!}$ and $\binom{n}{p,q} = \frac{n!}{p!q!}$.

\begin{remark}
The proof of stability for the multi-dimensional algorithm follows directly
from that of the one-dimensional case, with the same approach applied to each
spatial dimension (i.e. $\phi(z) = \phi_x(z)\phi_y(z)$ for the 2D case, and
similarly for the 3D case).
\end{remark}


\section{The heat equation} \label{sec:Linear}

\subsection{Heat equation in 1D} 
\label{subsec:1dheat}

We first illustrate the accuracy of our method for the 1D heat equation defined
in \eqref{eqn:heat}. We consider initial conditions $u(x,0) = \sin(x)$, for $x
\in [0,2\pi]$ with periodic boundary conditions.  We integrate up to a final
time of $T=4$, and set $\gamma = 0.18^2$.  
We use the fast convolution algorithm that is fourth order accurate in space
($M=4$), and set the spatial grid size to be $\Delta x = \frac{2\pi}{1024} \approx 0.0061$.
This ensures that the dominant error in the solution is temporal.
We compute errors by the $L^{\infty}$-norm, and compare against the
exact solution $u(x,T) = e^{-\gamma T}u_0(x)$ at $T=4$. The result of a
temporal refinement study for $P=1,2$ and $3$ is presented in Table
\ref{tab:refinement_1D_1}. 

\begin{table}[htbp]
\begin{center}
\caption{Refinement study for a 1D Heat equation defined in \ref{subsec:1dheat}.}
    \label{tab:refinement_1D_1}
\begin{centering}
\begin{tabular}{|c||c|c||c|c||c|c|}
\hline 
& \multicolumn{2}{c||}{$P=1$}	& \multicolumn{2}{c||}{$P=2$ }            & \multicolumn{2}{c|}{$P=3$}			\\ \hline
$\Delta t$  & $L^{\infty}$ error    & order & 	$L^{\infty}$ error	      & order & 	$L^{\infty}$ error	& order	\\ \hline
$0.1$		& $\num{0.00018405}$	& $-$		& $\num{0.0000016255}$    & $-$		& $\num{0.000000024225}$	& $-$	 		\\ \hline
$0.05$		& $\num{0.000092121}$	& $0.9985$	& $\num{0.00000040841}$	  & $1.9928$	& $\num{0.0000000030620}$	& $2.9839$		\\ \hline
$0.025$		& $\num{0.000046084}$	& $0.9993$	& $\num{0.00000010236}$	  & $1.9964$	& $\num{0.00000000038501}$	& $2.9915$		\\ \hline
$0.0125$	& $\num{0.000023048}$	& $0.9996$	& $\num{0.000000025622}$  & $1.9982$	&$\num{0.000000000048402}$	& $2.9918$		\\ \hline
$0.00625$	& $\num{0.000011525}$	& $0.9998$	& $\num{0.0000000064097}$ & $1.9990$	& $\num{0.0000000000062021}$	& $2.9642$		\\ \hline
\end{tabular}
\end{centering}
\end{center}
\end{table}

\subsection{Heat equation in 2D}
\label{subsec:2dheat}

As a second example, we present results for the 2D heat equation. We consider initial
conditions $u(x,y,0)=\sin(x)\sin(y)$, for $(x,y) \in [0,2\pi] \times
[0,2\pi]$ with periodic boundary conditions.  We use a uniform mesh of size $\Delta x=\Delta y=
2\pi/512 \approx 0.0123$. Likewise, the $L^{\infty}$-error is computed by
comparing against the exact solution $u(x,y,T) = e^{-2\gamma T} u_0(x,y)$ at
the final time $T=1$. In
Table \ref{tab:refinement_2D_1}, we present results for a temporal refinement
study for orders $P=1,2,$ and $3$.

\begin{table}[htbp]
\begin{center}
\caption{Refinement study for a 2D Heat equation defined in \ref{subsec:2dheat}.}
    \label{tab:refinement_2D_1}
\begin{centering}
\begin{tabular}{|c||c|c||c|c||c|c|}
\hline 
        & \multicolumn{2}{c||}{$P=1$}	& \multicolumn{2}{c||}{$P=2$ }				& \multicolumn{2}{c|}{$P=3$}			\\ \hline
$\Delta t$		& 	 $L^{\infty}$-error		& order& 	 $L^{\infty}$-error		& order & 	 $L^{\infty}$-error	& order	\\ \hline
$0.1$		& $\num{0.000098182}$	& $-$		& $\num{0.00000086717}$	& $-$		& $\num{0.000000012925}$	& $-$	 		\\ \hline
$0.05$		& $\num{0.000049143}$	& $0.9985$	& $\num{0.00000021788}$	& $1.9928$	& $\num{0.0000000016354}$	& $2.9825$		\\ \hline
$0.025$		& $\num{0.000024584}$	& $0.9992$	& $\num{0.000000054608}$	& $1.9963$	& $\num{0.00000000020791}$	& $2.9756$		\\ \hline
$0.0125$	& $\num{0.000012295}$	& $0.9996$	& $\num{0.000000013672}$	& $1.9979$	&$\num{0.000000000029204}$	& $2.8317$		\\ \hline
\end{tabular}
\end{centering}
\end{center}
\end{table}


        \section{Reaction-diffusion systems}
        \label{sec:Source}

We next extend our method to nonlinear reaction-diffusion systems of the form
\begin{align}
    \label{eqn:Source}
    {\bf u}_t = {\bf D} \nabla^2 {\bf u} + {\bf{F}}({\bf {u}}), \quad ({\bf x},t) \in \Omega \times (0,T],
\end{align}
where ${\bf u} = (u_1, u_2, \cdots, u_N)$, with $u_i = u_i({\bf x},t)$, $\bf D$
is a diffusion coefficient matrix, and the reaction term ${\bf F} := (f_1, f_2,
\cdots, f_N)$ is a function of $u_i$, $(i=1,2,\cdots,N)$. In the above, 
$\Omega \subset {\mathbb R}^N$ is a bounded domain, and we assume appropriate initial values and boundary conditions.
We shall view the diffusion as being the linear part of the differential operator, and invert this linear part analytically, using successive convolution. To derive the scheme, we use operator calculus to first write
\begin{align}\label{eqn:operator_calculus}
    \left(\partial_t - {\bf D} \nabla^2\right){\bf u} = {\bf{F}} \quad \implies \quad \left(e^{-{\bf D} t \nabla^2} {\bf u}\right)_t = e^{-{\bf D} t \nabla^2}{\bf{F}},
\end{align}
where $e^{-{\bf D} t \nabla^2}$ is a pseudo-differential operator. Upon integrating \eqref{eqn:operator_calculus} over the interval $[t, t+\Delta t]$, we arrive at the update equation
\begin{align}
    {\bf u} (t+\Delta t) - e^{ {\bf D} \Delta t \nabla^2}{\bf u} (t) =& \int_t^{t+\Delta t} e^{ {\bf D} (t+\Delta t-\tau) \nabla^2 }{\bf F} (\tau) d\tau \nonumber  \\
    \label{eqn:first}
                                =& \int_0^{\Delta t} e^{{\bf D}(\Delta t- \tau) \nabla^2}{\bf F} (t+ \tau)d\tau,
\end{align}
where we have made use of the abbreviated notation, ${\bf F}(t) := {\bf F}({\bf u}({\bf x},t))$.
On the left hand side, the diffusion terms have been collected by this pseudo-differential operator, and will be approximated using the successive convolution techniques developed above. The reaction terms on the right hand side \eqref{eqn:first} are fully nonlinear, and we must consider nonlinear stability when choosing a method of discretization.


We first consider approximating the integral on the right hand side \eqref{eqn:first} with the trapezoidal rule. This defines a single-step update equation, which will be second order accurate
\begin{align} \label{eqn:2nd_scheme}
    {\bf u}(t+\Delta t) - e^{{\bf D} \Delta t \nabla^2}{\bf u}(t) = 
    \frac{\Delta t}{2} \left[ e^{{\bf D} \Delta t \nabla^2} {\bf F}(t) +{\bf F}(t+\Delta t) \right].
\end{align}
We may also obtain a single-step third order scheme, using multiderivative integration \cite{seal2014high}. By replacing the integrand \eqref{eqn:first} with a third order Hermite-Birkhoff interpolant and performing exact integration of the resulting
function, we arrive at
\begin{align} \label{eqn:3rd_scheme}
    {\bf u}(t+\Delta t) - e^{{\bf D} \Delta t \nabla^2}{\bf u}(t) = e^{{\bf D} \Delta t \nabla^2} 
    \left[ \dfrac{2 \Delta t}{3} {\bf F}(t) + \dfrac{\Delta t}{6} \left( -{\bf D}\Delta t \nabla^2 {\bf F}(t) + \Delta t \dfrac{d{\bf F}}{d{\bf t}}(t) \right)\right] 
    +\dfrac{\Delta t}{3}{\bf F}(t+\Delta t),
\end{align}
where $\frac{d{\bf F}}{dt}(t) = \frac{d{\bf F}}{d{\bf u}}(t) \cdot ({\bf D}\nabla^2 {\bf u}(t)+{\bf F}(t))$.
The Hermite-Birkhoff interpolant that matches the integrand in \eqref{eqn:first} at times $\tau = 0$, and $\tau=\dt$, as well as its derivative at time $\tau = 0$ produces the quadrature rule in \eqref{eqn:3rd_scheme}.

\begin{remark}
The proposed schemes in \eqref{eqn:2nd_scheme} and \eqref{eqn:3rd_scheme} produce nonlinear equations
for ${\bf u}(x,t+\Delta t)$ that need to be solved at each time step.  Therefore, any implicit solver 
will necessarily be problem dependent.
\end{remark}

For the problems examined in this work, we make use of simple fixed-point iterative schemes.  
We stabilize our iterative solvers by linearizing ${\bf F} ({\bf u})$ about a background 
state ${\bf F}_{\bf u}({\bf u}^{\ast})$, which depends on the problem under consideration. 

%

\subsection{A discretization of the Laplacian operator} 

Upon perusing the third order update equation \eqref{eqn:3rd_scheme}, we will need to use successive convolution to replace the psuedo-differential operator $\exp\left({\bf D} \Delta t \nabla^2\right)$, as well as the Laplacian operator $\nabla^2$. This latter point has been detailed in \cite{Causley_2013}, and so we comment briefly on it here. Using the one-dimensional expansion \eqref{eqn:Lap_D}, we observe that the two-dimensional Laplacian is similarly given by
\[
	-\frac{\nabla^2}{\alpha^2} = -\frac{\partial_{xx}}{\alpha^2}-\frac{\partial_{yy}}{\alpha^2} = \sum_{p=1}^\infty \left(\mathcal{D}_x^p+\mathcal{D}_y^p\right),
\]
and can be truncated at the appropriate accuracy $p=P$. Here, the subscripts indicate that the convolution is only in one spatial direction, and the other variable is held fixed. Thus, $\mathcal{D}_x$ is applied along horizontal lines for fixed $y$-values, and likewise for $\mathcal{D}_y$.

\section{Nonlinear numerical results}
\label{sec:Numerical}

\subsection{Allen-Cahn}
\label{subsec:allencahn}

We examine in greater detail the application of our schem to the Allen-Cahn (AC) equation \cite{Allen1979},
\begin{align}
    \label{eqn:AC}
    u_t = \epsilon^2 \nabla^2 u + f(u), \qquad (x,t) \in \Omega \times (0,T],
\end{align}
where the reaction term is $f(u) = u - u^3$, and $\Omega \subset {\mathbb R}^d$ is a bounded
domain, and $u$ satisfies homogeneous Neumann boundary conditions.

For our fixed point iteration, we linearize $f$ about the stable fixed points  $u^{\ast} = \pm 1$, which
satisfy $f'(u^{\ast}) = 0$.  For example, the 
second order scheme from \eqref{eqn:2nd_scheme} becomes
\begin{equation} \label{eqn:AC_2ndorder_iteration}
\left(1 +\Delta t \right)u^{n+1,k+1} =  e^{\epsilon^2 \Delta t \nabla^2} \left( u^n + \frac{\Delta t}{2} f^n \right)+ \frac{\Delta t}{2} \left(f^{n+1,k} +2u^{n+1,k} \right),
\end{equation}
where $n$ indicates the time step as before, and now $k$ is the iteration index. By lagging the nonlinear term $f^{n+1,k}$, the fixed point update is made explicit. Likewise, the
third order scheme from \eqref{eqn:3rd_scheme} becomes

\begin{align} \label{eqn:AC_3rdorder_iteration}
    \left( 1+\frac{2\Delta t}{3} \right)u^{n+1,k+1} 
=  e^{\epsilon^2 \Delta t \nabla^2} \left[u^n +  \frac{2\Delta t}{3} f^n +
\frac{\Delta t}{6} \left( -\epsilon^2 \Delta t \nabla^2 f^n +\Delta t f_t^n \right) \right] +\frac{\Delta t}{3} \left(f^{n+1,k} +2u^{n+1,k}\right).
\end{align}
Here, $e^{\epsilon^2 \Delta t \nabla^2}$ is again understood by replacing it with a resolvent expansion, which is a truncated series of successive convolution operators.

\subsection{Allen-Cahn: One-dimensional test}
\label{subsec:1dallen}

We demonstrate the accuracy of our proposed schemes by simulating a well-known traveling wave solution \cite{chen1998applications,
lee2014semi},

\begin{equation} \label{AC_initial}
u_{AC}(x,t) = \frac{1}{2} \left( 1 - \tanh\left(\frac{x-T_s-st}{2\sqrt{2} \epsilon} \right) \right), \qquad x \in \Omega = [0,4], \quad 0 \leq t \leq T.
\end{equation}
Here, $s = \frac{3\epsilon}{\sqrt{2}} = 0.09 $ is the speed of the traveling
wave, and we choose $\epsilon = 0.03\sqrt{2}$. 
Additionally, we choose the delay time $T_s := 1.5 - sT$, so that the exact solution satisfies 
$u_{AC}(1.5,T) = 0.5$.


\begin{figure}[h]
\centering
\includegraphics[width = 0.3\textwidth]{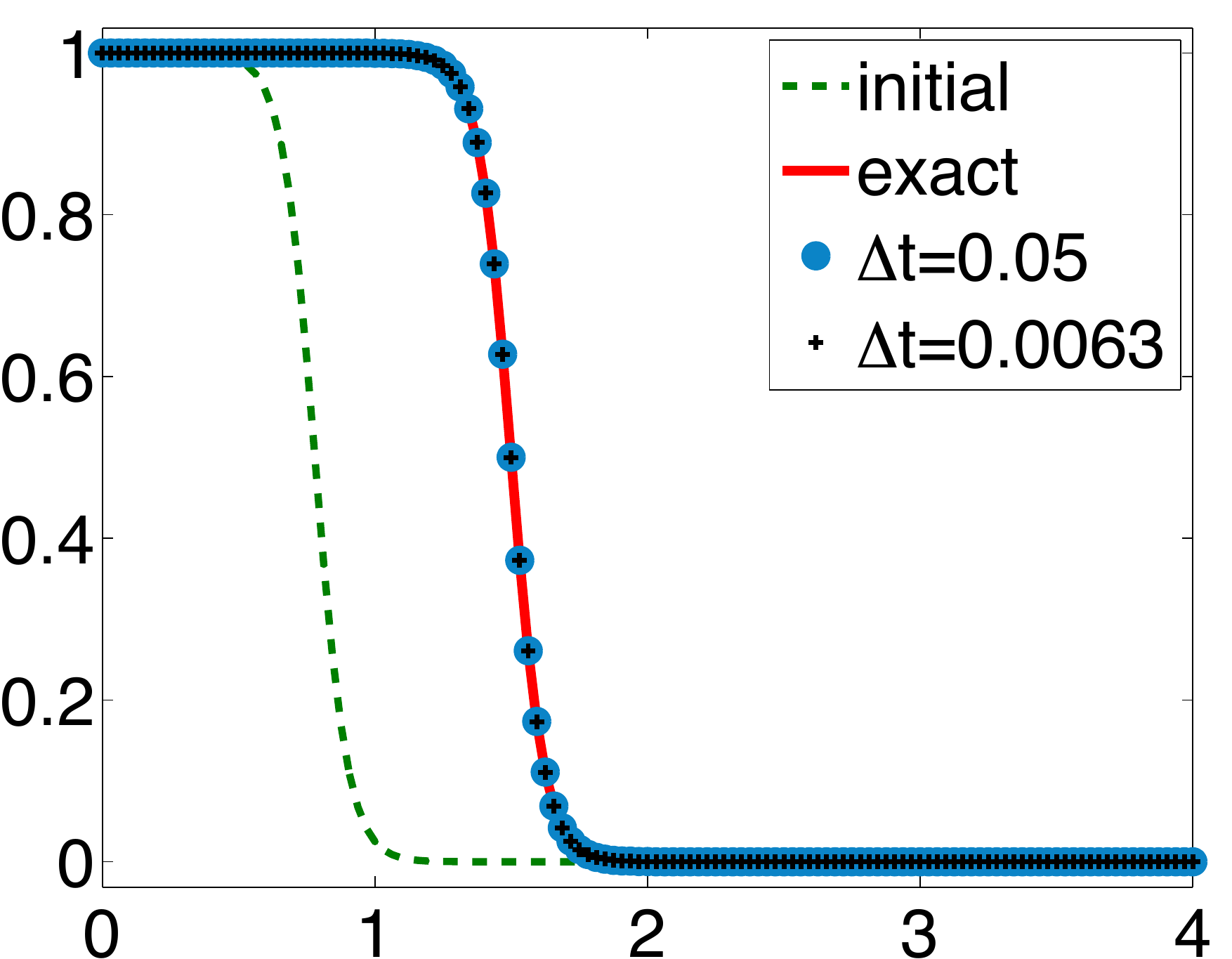}
\caption{Traveling wave solutions $u(x,T)$ at $T=8$ using 
\eqref{eqn:AC_2ndorder_iteration} with two different time step sizes, compared
with the exact profile in \eqref{AC_initial}.}
\label{fig:AC_1d}
\end{figure}

Results for a final time of 
$T=8$ are shown in Figure
\ref{fig:AC_1d}, with two different time steps. The solutions agree well with the exact solution.
\begin{table}[htbp]
\begin{center}
\caption{Refinement study for the 1D Allen-Cahn (AC) equation with an exact traveling wave solution \ref{subsec:1dallen}.}
\label{tab:refinement_AC}
\begin{centering}
\begin{tabular}{|c||c|c||c|c||c|c|}
\hline 
& \multicolumn{2}{c||}{$P=1$}	& \multicolumn{2}{c||}{$P=2$ }				& \multicolumn{2}{c|}{$P=3$}\\ \hline
$\Delta t$		& 	$L^{\infty}$ error	& order & $L^{\infty}$ error		& order 	& $L^{\infty}$ error		& order \\ \hline
$0.025$		& $\num{0.00028216}$	& $-$		& $\num{0.000013895}$	& $-$	 & $\num{0.0000026060}$	& $-$	 \\ \hline
$0.0125$		& $\num{0.00014419}$	& $0.9686$	& $\num{0.0000036115}$	& $1.9439$ & $\num{0.00000039417}$	& $2.7249$	\\ \hline
$0.0063$		& $\num{0.000072874}$	& $0.9845$	& $\num{0.00000092164}$	& $1.9703$ & $\num{0.000000055010}$	& $2.8411$	\\ \hline
$0.0031$	& $\num{0.000036632}$	& $0.9923$	& $\num{0.00000023294}$	& $1.9842$ & $\num{0.0000000073122}$	& $2.9113$	\\ \hline
$0.0016$	& $\num{0.000018365}$	& $0.9961$	& $\num{0.000000058695}$	& $1.9886$ & $\num{0.00000000095714}$	& $2.9335$	\\ \hline
\end{tabular}
\end{centering}
\end{center}
\end{table}
In Table \ref{tab:refinement_AC}, we present the $L^{\infty}$-error in the numerical solution at a final time $T=1$, using the
exact solution $u_{AC}(x,T)$ \eqref{AC_initial}. We observe first order accuracy from the Backward Euler method, and the expected orders of accuracy from the second \eqref{eqn:AC_2ndorder_iteration} and third \eqref{eqn:AC_3rdorder_iteration} order schemes. To ensure that the temporal error is dominant, we have used the fourth order accurate scheme (eq. \eqref{eqn:JL} with $M=4$), with $\Delta x = 2^{-9}$ to perform spatial integration in the successive convolutions.

In principle, we can achieve higher orders accuracy in space and time. The latter would require using higher order
Hermite-Birkhoff interpolation to discretize the reaction term in \eqref{eqn:first}. 

\subsection{Allen-Cahn: Two-dimensional test}
\label{subsec:2dallen}

We next solve the Allen-Cahn equation in two spatial dimensions. A standard benchmark
problem involves the motion of a circular interface 
\cite{chen1998applications, lee2014semi, shen2010numerical}, to which an
exact solution is known in the limiting case $\epsilon \to 0$. The radially symmetric initial conditions are 
defined by
\begin{equation} \label{AC2d_initial}
u(x,y,0) = \tanh \left(
    \frac{0.25 - \sqrt{(x-0.5)^2+(y-0.5)^2}}{\sqrt{2} \epsilon} 
\right),
\end{equation}
which has an interfacial circle ($u(x,y,0)=0$) centered at $(0.5,0.5)$, with a radius of $R_0 = 0.25$. This interfacial circle is unstable, and will shrink over time, as determined by the mean curvature \cite{Allen1979}
\begin{equation}\label{eqn:meancurvature}
V = \frac{dR}{dt} = -\frac{1}{R}.
\end{equation}
Here $V$ is the velocity of the moving interface, and $R$ is the radius of the interfacial circle at time $t$ (i.e., it is the radius of the curve defined by $u(x,y,t)=0$). By integrating \eqref{eqn:meancurvature} with respect to time, we solve for the radius as a function of time
\begin{equation}\label{eqn:velocity}
R(t) = \sqrt{R_0^2 - 2\epsilon^2 t}.
\end{equation}
The location where $\epsilon$ is placed in equation \eqref{eqn:AC} differs from other references
\cite{chen1998applications, lee2014semi, shen2010numerical}.  Therefore, we point out that our time scales have been appropriately rescaled for comparison.

The moving interface problem was simulated using $\epsilon = 0.05$, $\Delta t = \frac{6.4 \times 10^{-4}}{\epsilon^2} =
0.0256$, and $\Delta x = \Delta y = 2^{-8} \approx 0.0039$, which are based on the parameters used in \cite{lee2014semi}. The numerical solution is displayed in Figure \ref{fig:AC_2d}, and we observe that the interfacial circle shrinks, as is expected.

\begin{figure}[h]
\centering
    \subfloat[$u(x,y,0)$]{\includegraphics[width = 0.30\textwidth]{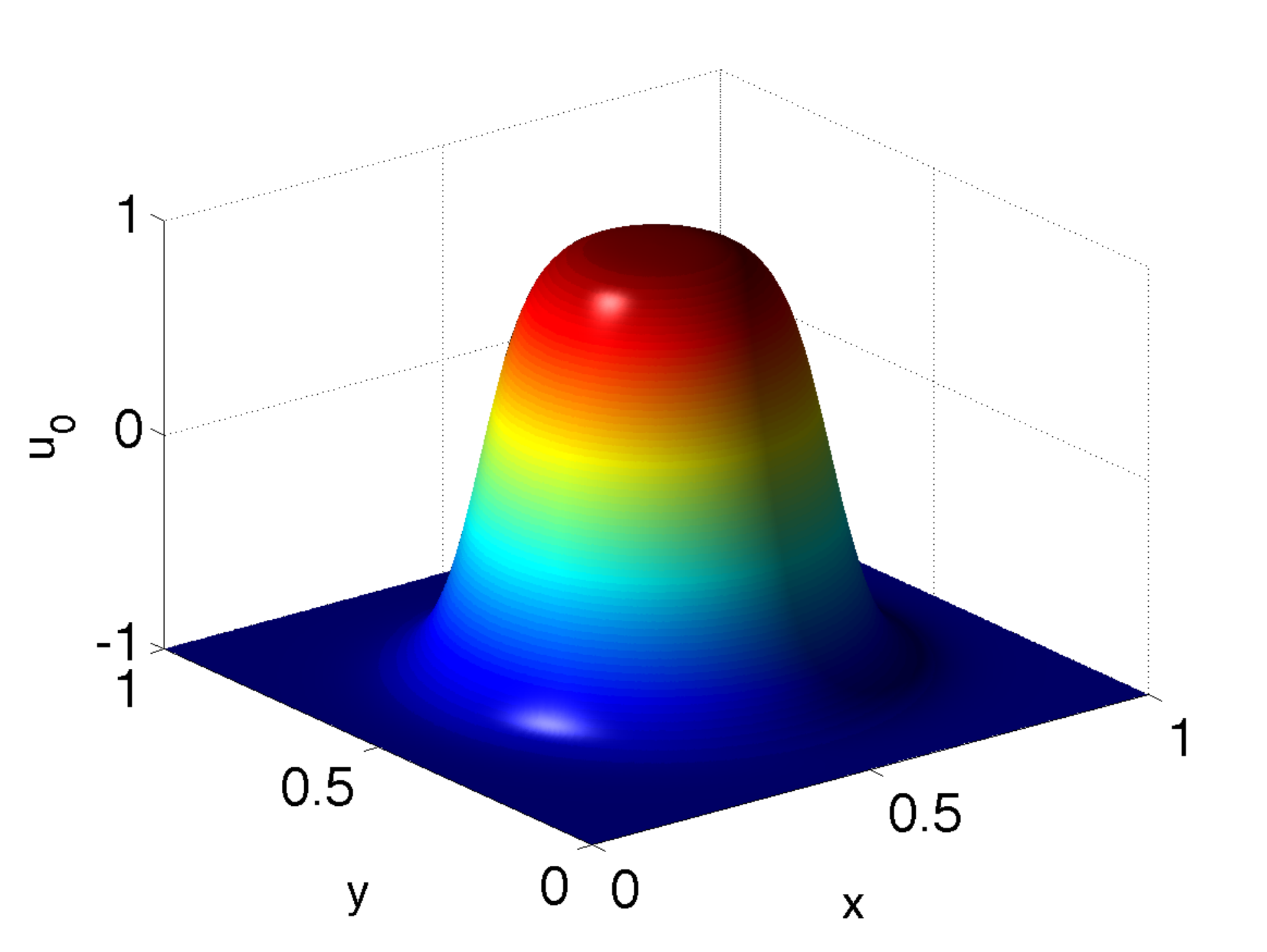}}
    \subfloat[$u(x,y,\frac{T}{2})$]{\includegraphics[width = 0.30\textwidth]{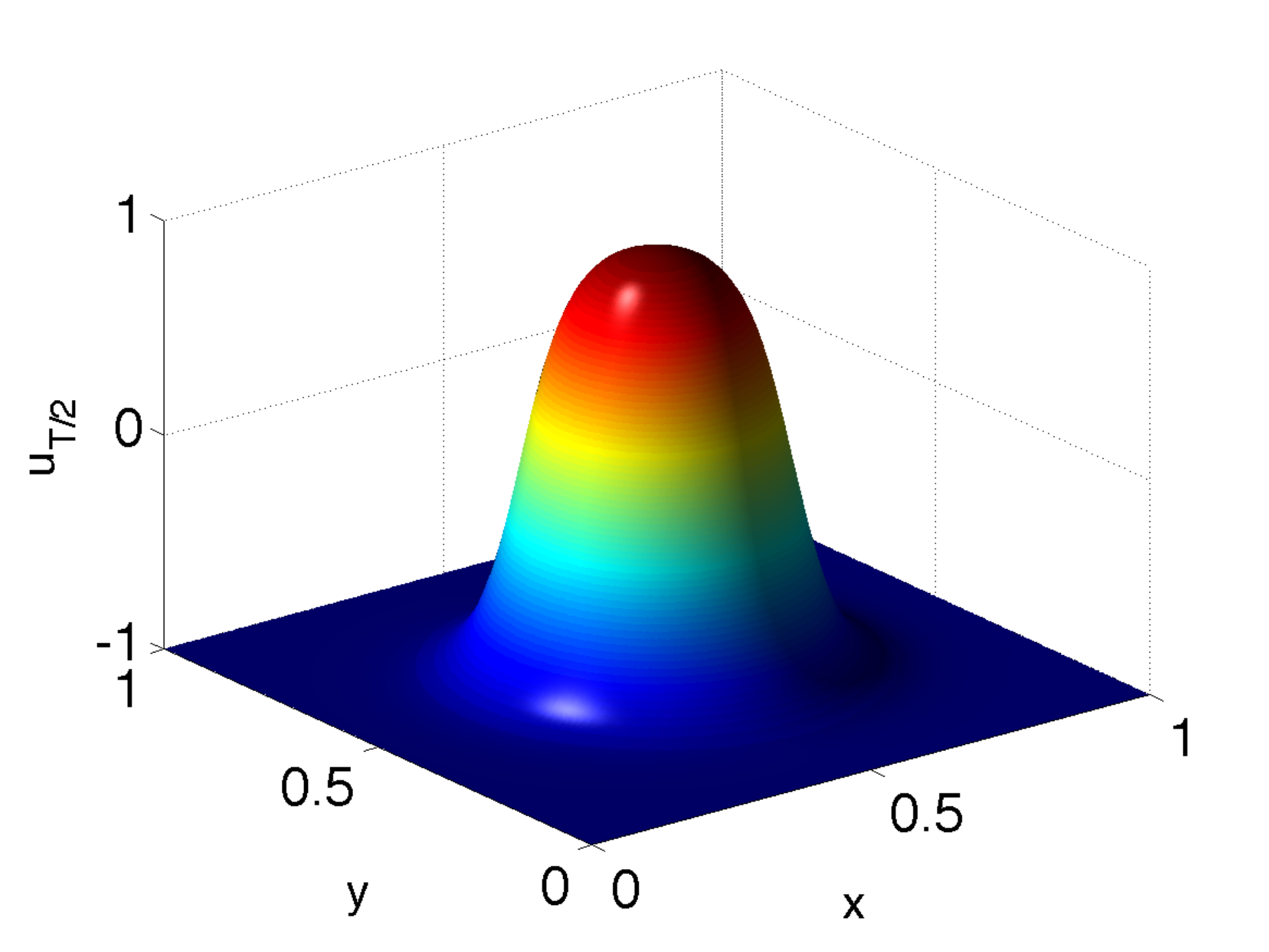}}
    \subfloat[$u(x,y,T)$]{\includegraphics[width = 0.30\textwidth]{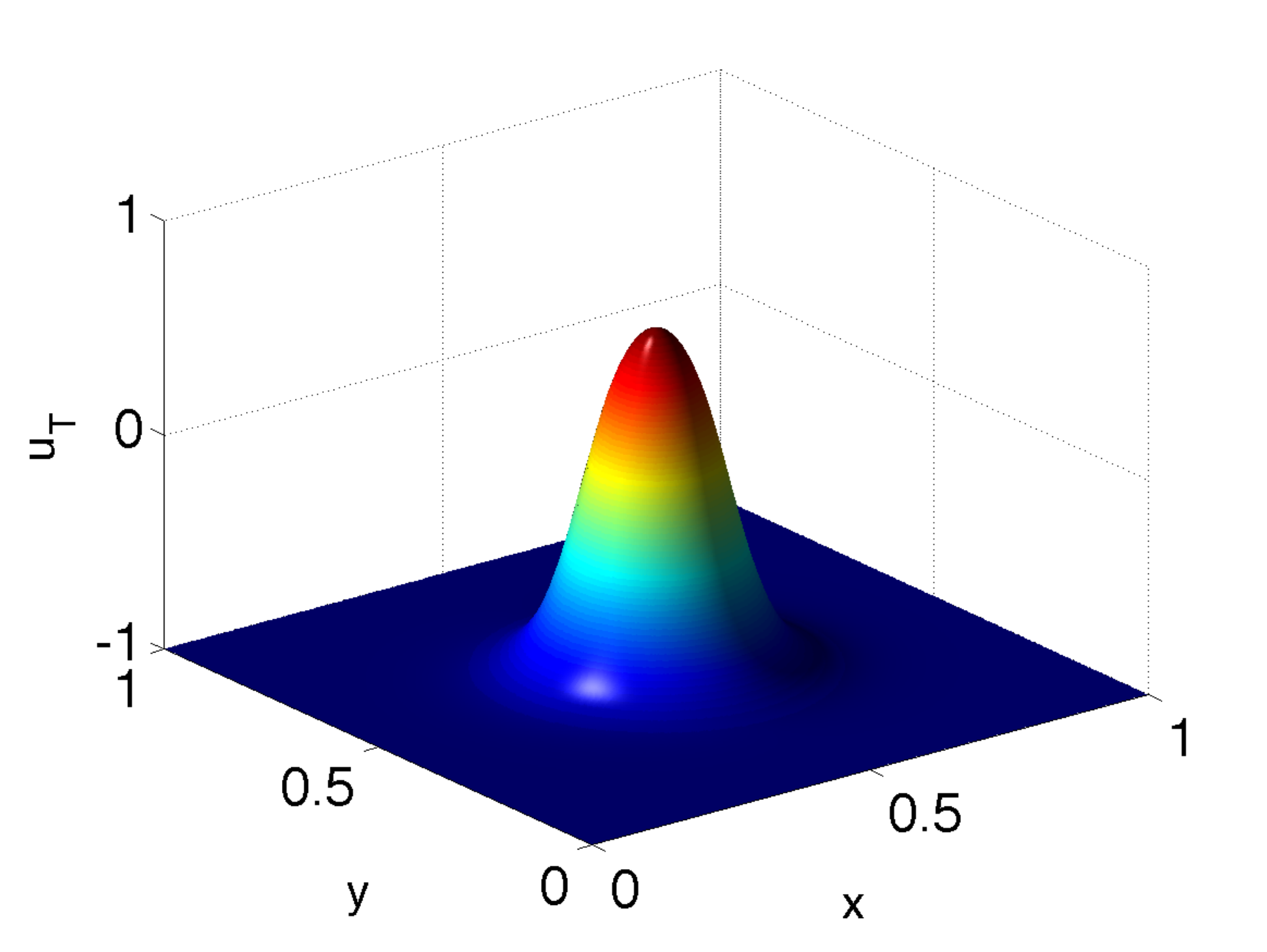}}
    \caption{Time evolution of the initial condition \eqref{AC2d_initial} of the Allen-Cahn equation, up to time $T=\frac{0.0256}{\epsilon^2} = 10.24$.}
    \label{fig:AC_2d}
\end{figure}

In Figure \ref{fig:AC_2d2}, we plot compare the evolution of the radii obtained by our second order scheme with the exact radius \eqref{eqn:velocity}, for two different values of the diffusion parameter $\epsilon$. The radius is measured by taking a slice of the solution along $y=0$, and then solving for the spatial point where $u=0$ using linear interpolation between the two closest
points that satisfy $u(x,0,t) < 0$ and $u(x,0,t) > 0$. Refinement is performed with a fixed spatial mesh $\Delta x = \Delta
y = 2^{-8}$, and time steps of $\Delta t = 0.2560, 0.1280$, and $0.0640$. Because the radius is derived as an exact solution in the limit (i.e., $\epsilon \to 0$) \cite{Allen1979}, we observe that the smaller value of $\epsilon$ is indeed more accurate.

\begin{figure}[h]
\centering
    \subfloat[$\epsilon=0.05$]{\includegraphics[width = 0.46\textwidth]{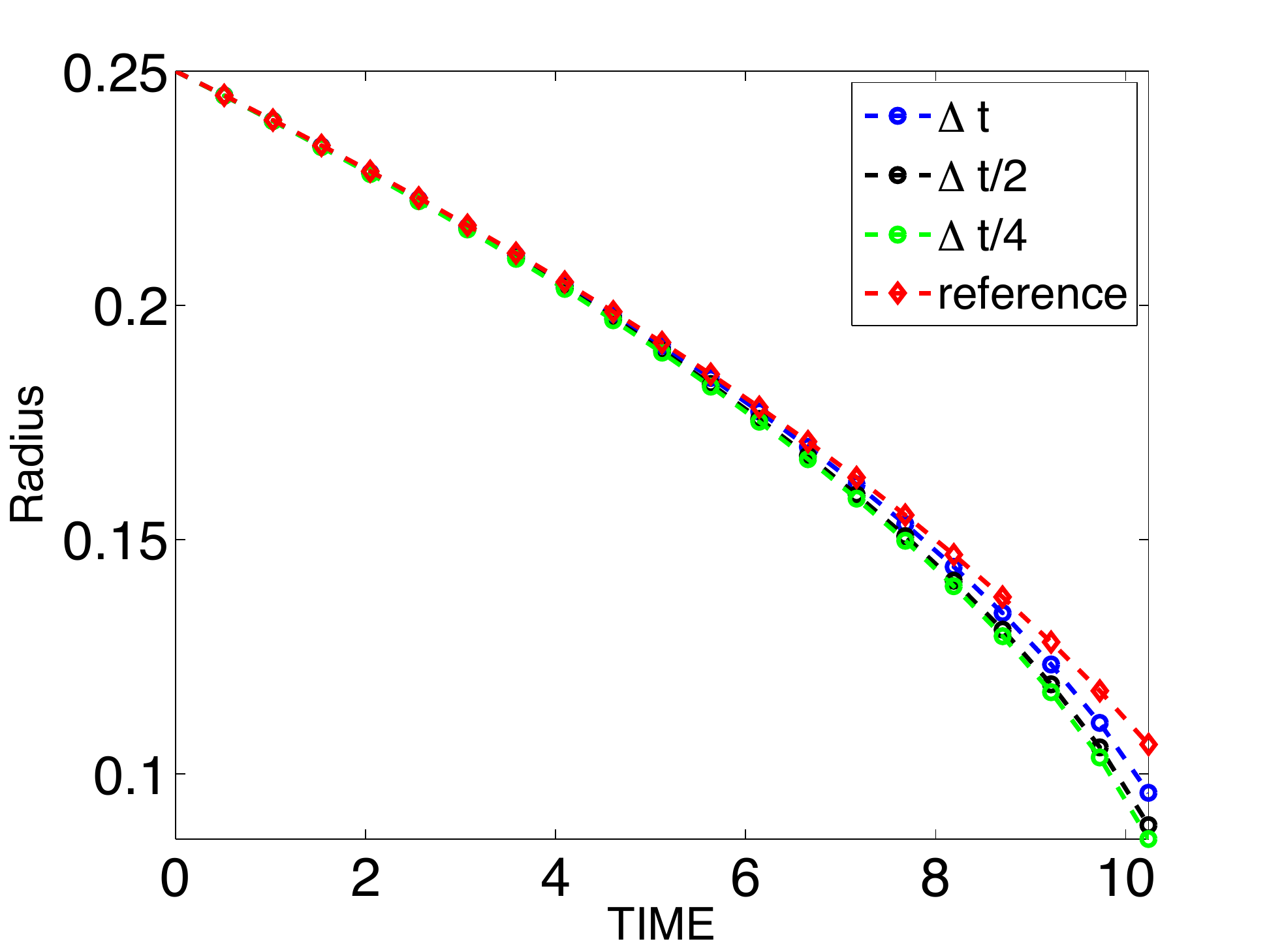}}
    \subfloat[$\epsilon=0.01$]{\includegraphics[width = 0.46\textwidth]{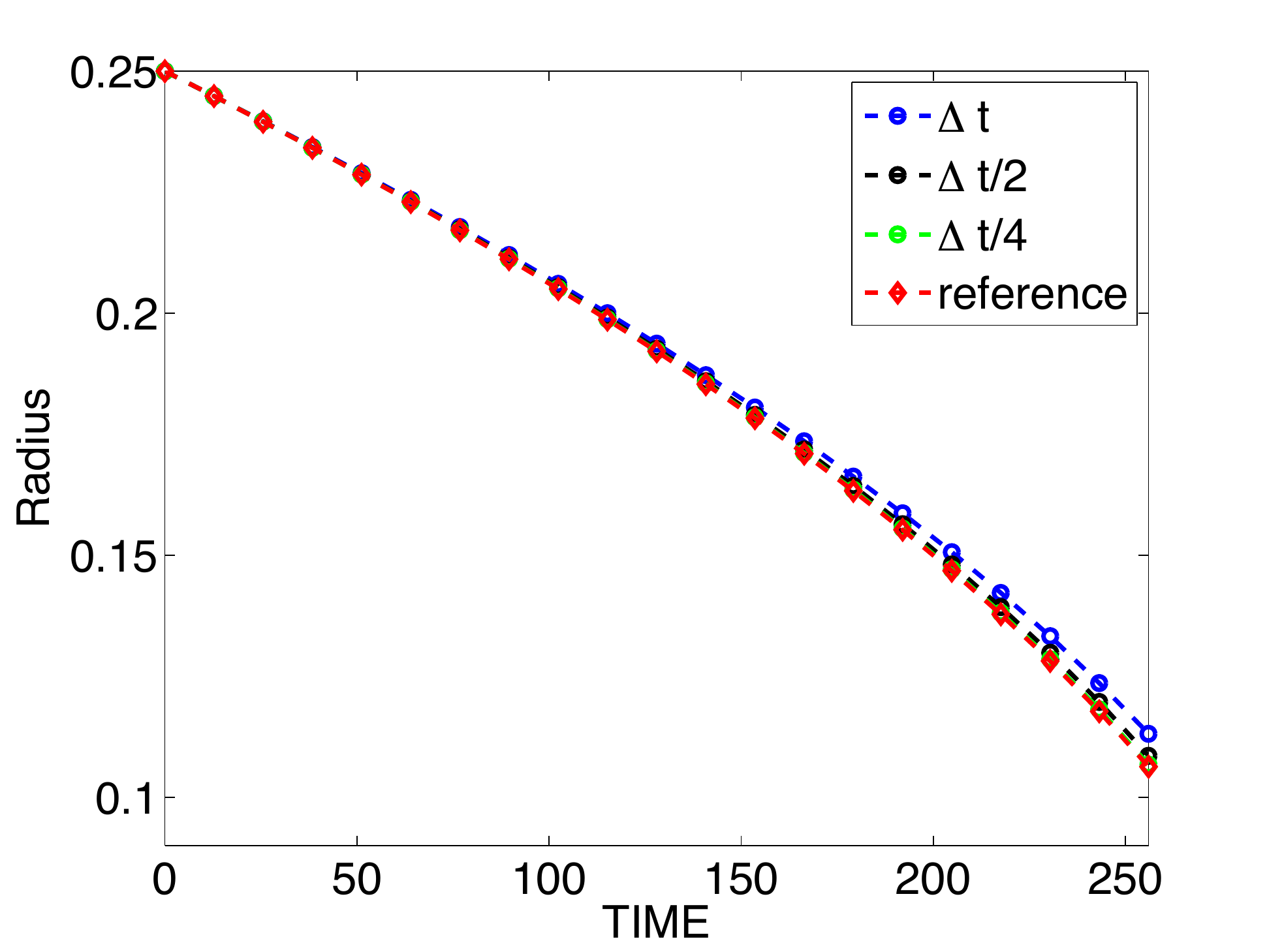}}
    \caption{Radii of the interfacial circle as a function of time ($ 0 \leq t \leq \frac{0.0256}{\epsilon^2}$) compared with  the reference radius R (red line) in \eqref{eqn:velocity}.}
    \label{fig:AC_2d2}
\end{figure}

We next perform a refinement study for the Allen-Cahn equations, but this time in two spatial dimensions. To do so, we must incorporate the multivariate successive convolution algorithms in \eqref{eqn:EPL_2} and \eqref{eqn:Lap_D} into the second 
\eqref{eqn:AC_2ndorder_iteration} and third
\eqref{eqn:AC_3rdorder_iteration} order schemes. Given that we do not have an exact solution, we compute
successive errors in an $L^{\infty}$-norm.  That is, we compute
$||u_{\Delta t} - u_{\frac{\Delta t}{2}} ||_{\infty}$ for each time step
$\dt$. Results are as expected, and are presented in Table \ref{tab:refinement_AC2D}. The parameters used are
$\epsilon = 0.05$, $\Delta x = \Delta y =2^{-9}$, and the final 
computation time is $T=0.5$. Again, the quadrature method is fourth order accurate in space, so that the dominant source of error is temporal.

\begin{table}[htbp]
\begin{center}
\caption{Refinement study for the Allen-Cahn equataion in 2D, with homogeneous Neumann boundary conditions.}
\label{tab:refinement_AC2D}
\begin{centering}
\begin{tabular}{|c||c|c||c|c||c|c|}
\hline 
& \multicolumn{2}{c||}{$P=1$}	& \multicolumn{2}{c||}{$P=2$ }				& \multicolumn{2}{c|}{$P=3$}\\ \hline
$\Delta t$		& 	$L^{\infty}$ error	& order & $L^{\infty}$ error		& order 	& $L^{\infty}$ error		& order \\ \hline
$0.0063$		& $\num{0.00065941}$	& $-$	& $\num{0.00011740}$	& $-$ & $\num{0.0000051744}$	& $-$	\\ \hline
$0.0031$		& $\num{0.00033065}$	& $0.9959$	& $\num{0.000032637}$	& $1.8468$ & $\num{0.00000078351}$	& $2.7234$	\\ \hline
$0.0016$	& $\num{0.00016563}$	& $0.9973$	& $\num{0.0000086726}$	& $1.9120$ & $\num{0.00000010811}$	& $2.8574$	\\ \hline
$0.0008$	& $\num{0.000082894}$	& $0.9987$	& $\num{0.0000022389}$	& $1.9537$ & $\num{0.000000012961}$	& $3.0602$	\\ \hline
\end{tabular}
\end{centering}
\end{center}
\end{table}

\subsection{Two-dimensional test:  The Fitzhugh-Nagumo system}
\label{subsec:fitzhugh-nagumo}

Finally, we solve a well known reaction diffusion system that arises in the modeling of neurons, the Fitzhugh-Nagumo (FHN) model \cite{fife1979mathematical,keener1998mathematical}.  
The FHN system consists of an activator $u$ and an inhibitor $v$, which are coupled via nonlinear reaction diffusion equations

\begin{equation} 
\label{eqn:FHN}
\begin{aligned} 
u_t &= D_{u} \nabla^2 u + \frac{1}{\delta} h(u,v), \\
v_t &= D_{v} \nabla^2 v + g(u,v),
\end{aligned}
\end{equation} 
where $D_u$, $D_v$ are the diffusion coefficients for $u$ and $v$,
respectively, and $0 < \delta \ll 1$ is a real parameter. We use the classical
cubic FHN local dynamics \cite{keener1998mathematical}, that are defined 
as

\begin{equation}
\begin{aligned} 
\label{eqn:dynamics}
h(u,v) &= Cu(1-u)(u-a)-v, \\
g(u,v) &= u - dv,
\end{aligned}
\end{equation}
where $C, a$ and $d$ are dimensionless parameters. The parameters we use are
the same as in \cite{christlieb2015high,olmos2009pseudospectral}: $D_u = 1$,
$D_v =0$, $a=0.1$, $C=1$, $d=0.5$, and $\delta = 0.005$. The diffusion
coefficient for the inhibitor is $D_v=0$, identical to the work found in
\cite{olmos2009pseudospectral,krinsky1998models, starobin1997common,
xie2001coexistence}.



The second order scheme from \eqref{eqn:AC_2ndorder_iteration} is applied to each variable $u$ and $v$ separately.  This defines the numerical scheme as
\begin{equation} 
\begin{aligned} 
u^{n+1} &= e^{\Delta t \nabla^2} \left( u^n + \dfrac{\Delta t}{2\delta} h^n \right) + \dfrac{\Delta t}{2\delta} h^{n+1}, \\
v^{n+1} &= \left( v^n + \frac{\Delta t}{2} g^n \right) + \dfrac{\Delta t}{2} g^{n+1},
\label{eqn:FHN_2ndorder} 
\end{aligned}
\end{equation} 
where $h^n = h(u^n,v^n)$ and $g^n = g(u^n,v^n)$. We again use a stabilized fixed point iteration to address the nonlinear reaction terms. Because $(u^{\ast},v^{\ast}) = (0,0)$ is the only stable excitable fixed point of equation \eqref{eqn:FHN} simply construct the Jacobian of ${\bf{F}} = (h,g)$ of \eqref{eqn:dynamics} about this point:

\begin{equation} \label{eqn:Jacobian}
 \mathcal{J}_{\bf{F}} (u^{\ast},v^{\ast}) \cdot \begin{bmatrix}
       u -u^{\ast}           \\[0.3em]
       v -v^{\ast} 
     \end{bmatrix} \equiv \dfrac{\partial (h,g)}{\partial (u,v)}|_{(0,0)} \cdot \begin{bmatrix}
       u          \\[0.3em]
       v 
     \end{bmatrix}  = \begin{bmatrix}
       -C a& -1            \\[0.3em]
       1 & -d  
     \end{bmatrix} \begin{bmatrix}
       u            \\[0.3em]
       v  
     \end{bmatrix} =  \begin{bmatrix}
       -Cau -v            \\[0.3em]
       u - dv  
     \end{bmatrix}.
\end{equation}
The resulting second order scheme is

\begin{equation} \label{eqn:FHN_2ndorder_iteration}
\begin{bmatrix}
       1+\dfrac{Ca \Delta t}{2 \delta}& \dfrac{\Delta t}{2 \delta}            \\[0.5em]
       -\dfrac{\Delta t}{2} & 1+\dfrac{d \Delta t}{2}  
     \end{bmatrix} \begin{bmatrix}
       u^{n+1,k+1}            \\[0.3em]
       v^{n+1,k+1}  
     \end{bmatrix} = \begin{bmatrix}
       e^{\Delta t \nabla^2} \left( u^n + \dfrac{\Delta t}{2\delta} h^n \right)  \\[0.5em]
       v^n + \dfrac{\Delta t}{2} g^n
     \end{bmatrix} + \dfrac{\Delta t}{2} \begin{bmatrix}
       \frac{1}{\delta} \left( h^{n+1,k} +Ca u^{n+1,k} + v^{n+1,k} \right)         \\[0.5em]
       g^{n+1,k} - u^{n+1,k} + d v^{n+1,k}
     \end{bmatrix},
\end{equation}
where $k$ is the iteration number, and $n$ is the time level.

In Figure \ref{fig:FHN_2d}, we present the numerical evolution of the activator $u$ over the domain $\Omega=[-20,20] \times [-20,20]$,
with periodic boundary conditions. We observe similar spiral waves
that form in other recent work from the literature
\cite{christlieb2015high}.

\begin{figure}[h]
\begin{center}
\centering
    \subfloat[$T=1$]{\includegraphics[width = 0.30\textwidth]{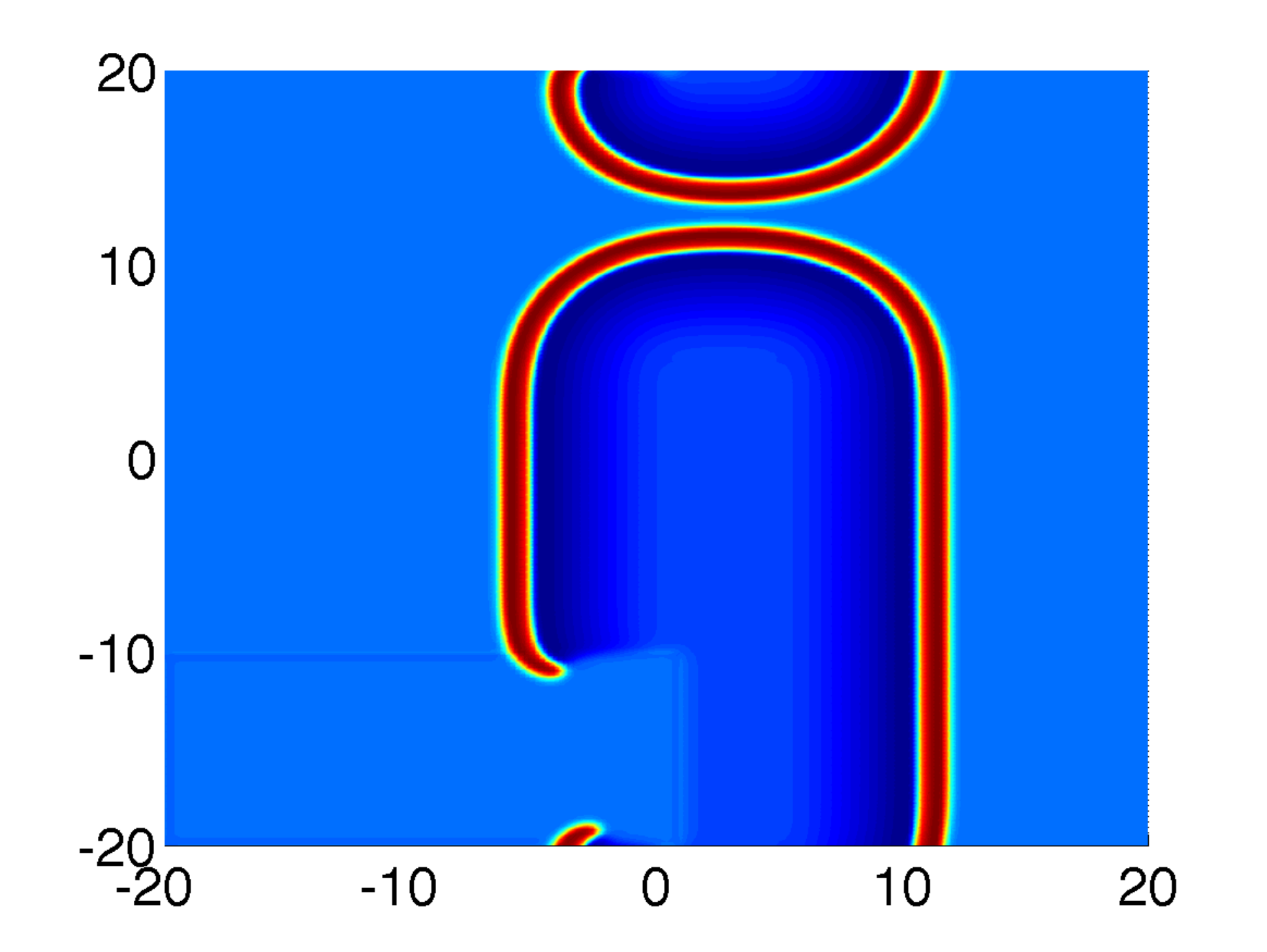}}
    \subfloat[$T=2$]{\includegraphics[width = 0.30\textwidth]{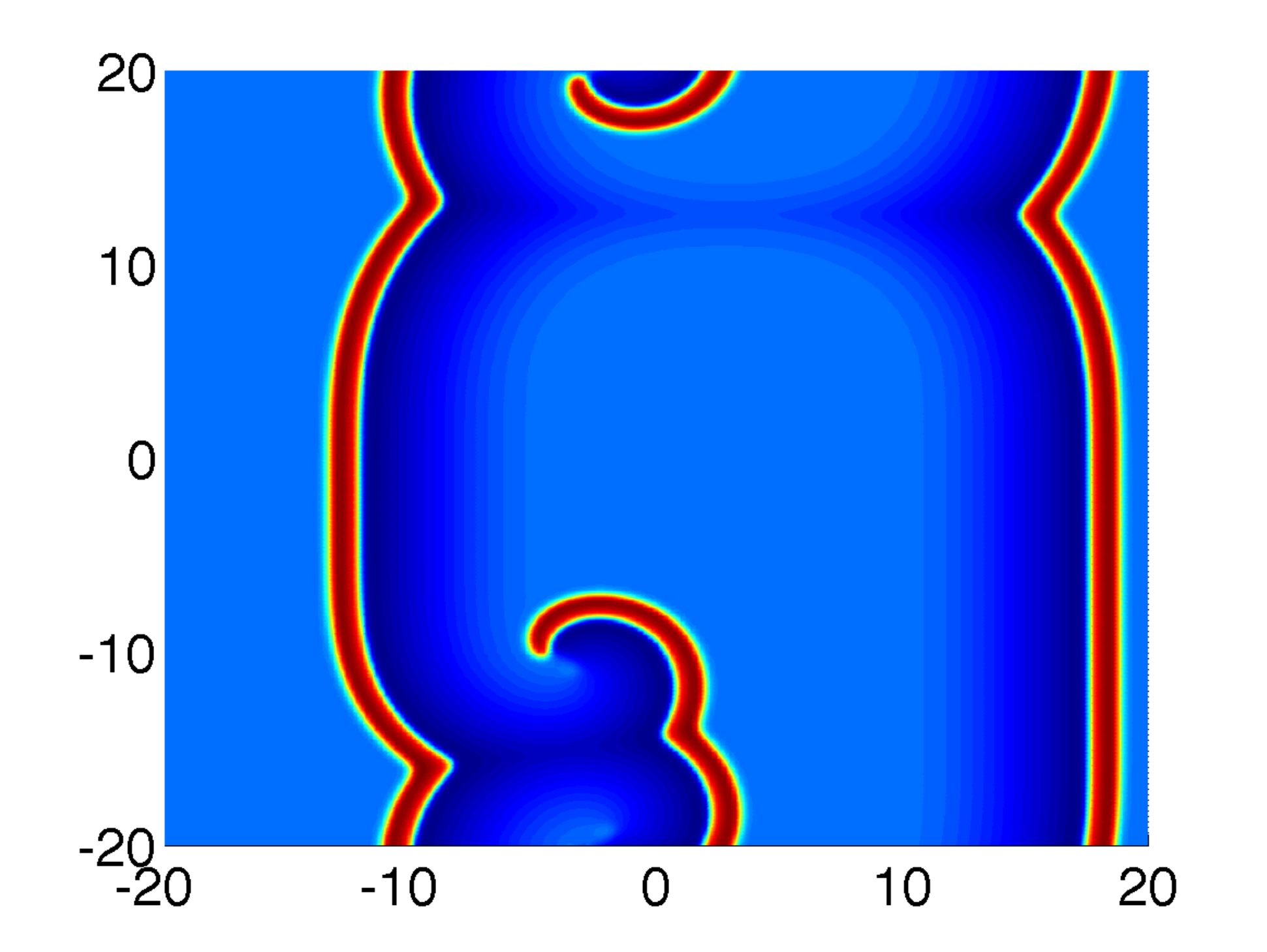}}
    \subfloat[$T=4$]{\includegraphics[width = 0.30\textwidth]{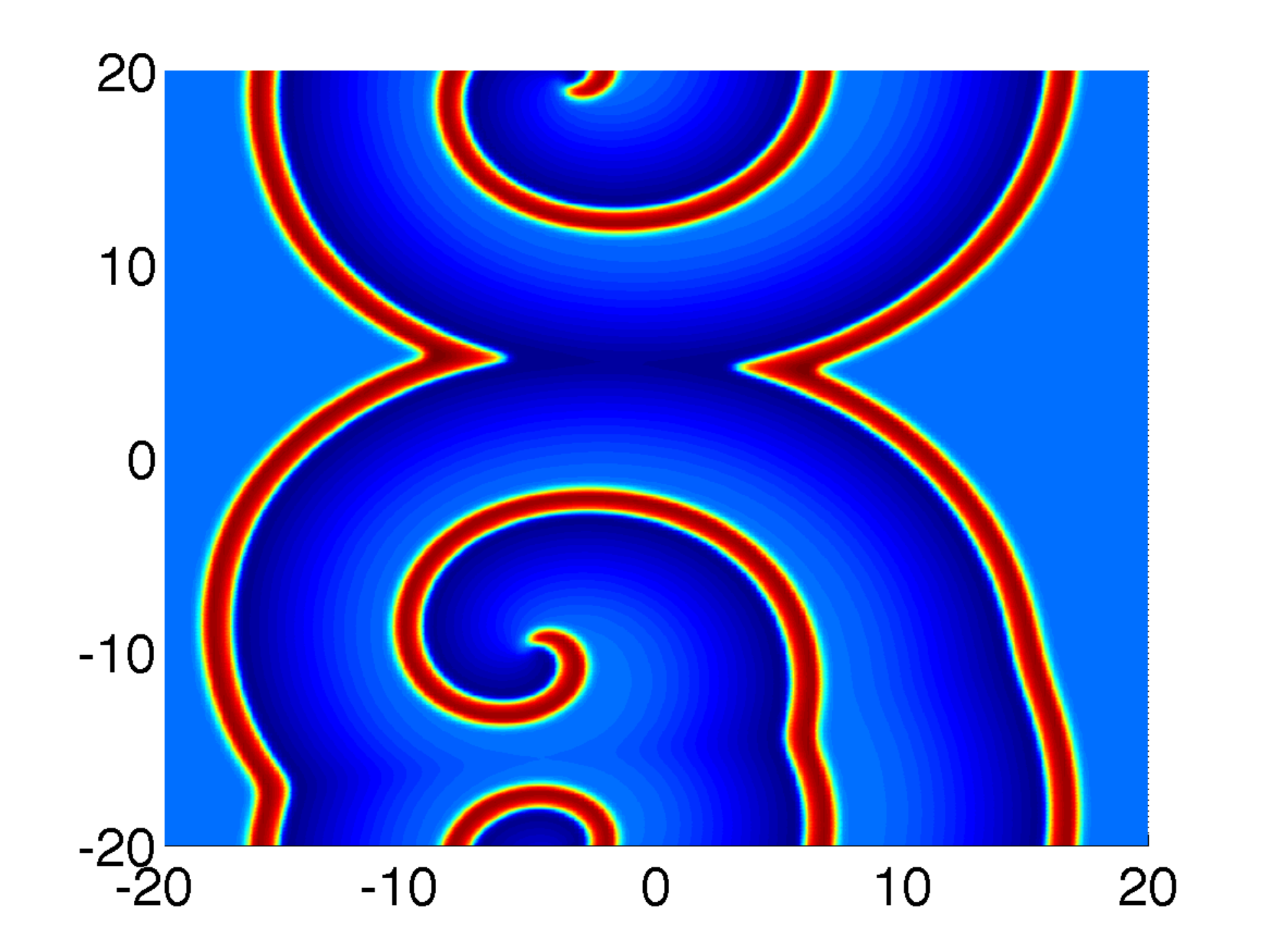}}
    \caption{Temporal evolution of the concentration of activator $u$ using \eqref{eqn:FHN_2ndorder_iteration}.}
    \label{fig:FHN_2d}
\end{center}
\end{figure}

\section{Conclusions}
\label{sec:conclusions}

In this work, we have introduced a numerical scheme for parabolic problems, 
which achieves high order in space and time for the linear heat equation and 
some sample nonlinear reaction diffusion equations. The scheme is $\BigOh(N)$ 
for $N$ spatial points, and exhibits stiff decay for any order of accuracy. 
In the future, we intend to explore parallelization of the algorithm using domain 
decomposition, from which we expect strong performance due to the decoupled spatial 
factorization that we employ.

{\bf Acknowledgements. }  We would like to thank the anonymous reviewer and editor for the encouragement to improve the manuscript.
This work was supported in part by 
AFOSR grants FA9550-12-1-0343, FA9550-12-1-0455, FA9550-15-1-0282, NSF grant
DMS-1418804, New Mexico Consortium grant NMC0155-01, and NASA grant NMX15AP39G.



\bibliographystyle{abbrv}
\bibliography{MOLT_Parabolic}

\begin{thebibliography}{10}

\bibitem{abadias2013c_0}
L.~Abadias and P.~J. Miana.
\newblock {$C_0$}-semigroups and resolvent operators approximated by {L}aguerre
  expansions.
\newblock {\em arXiv preprint arXiv:1311.7542}, 2013.

\bibitem{Allen1979}
S.~Allen and J.~Cahn.
\newblock {A microscopic theory for antiphase boundary motion and its
  application to antiphase domain coarsening}.
\newblock {\em Acta Metallurgica}, 1979.

\bibitem{Cahn1971}
J.~Cahn and J.~Hilliard.
\newblock Spinodal decomposition: A reprise.
\newblock {\em Acta Metallurgica}, 19(2):151 -- 161, 1971.

\bibitem{Cahn1961}
J.~W. Cahn.
\newblock On spinodal decomposition.
\newblock {\em Acta Metallurgica}, 9(9):795 -- 801, 1961.

\bibitem{Causley_2013c}
M.~Causley, A.~Christlieb, B.~Ong, and L.~Van~Groningen.
\newblock Method of lines transpose: an implicit solution to the wave equation.
\newblock {\em Math. Comp.}, 83(290):2763--2786, 2014.

\bibitem{Causley_2013}
M.~F. Causley and A.~J. Christlieb.
\newblock Higher order {A}-stable schemes for the wave equation using a
  successive convolution approach.
\newblock {\em SIAM J. Numer. Anal.}, 52(1):220--235, 2014.

\bibitem{Causley_2013b}
M.~F. Causley, A.~J. Christlieb, Y.~Guclu, and E.~Wolf.
\newblock Method of lines transpose: {A} fast implicit wave propagator.
\newblock {\em arXiv preprint arXiv:1306.6902}, 2013.

\bibitem{chen1998applications}
L.~Chen and J.~Shen.
\newblock Applications of semi-implicit {F}ourier-spectral method to phase
  field equations.
\newblock {\em Computer Physics Communications}, 108(2):147--158, 1998.

\bibitem{seal2014picard}
A.~J. Christlieb, Y.~G{\"u}{\c{c}}l{\"u}, and D.~C. Seal.
\newblock The {P}icard integral formulation of weighted essentially
  non-oscillatory schemes.
\newblock {\em SIAM J. Numer. Anal.}, 2015.
\newblock (accepted).

\bibitem{christlieb2015high}
A.~J. Christlieb, Y.~Liu, and Z.~Xu.
\newblock High order operator splitting methods based on an integral deferred
  correction framework.
\newblock {\em Journal of Computational Physics}, 294:224--242, 2015.

\bibitem{Crank1996}
J.~Crank and P.~Nicolson.
\newblock A practical method for numerical evaluation of solutions of partial
  differential equations of the heat-conduction type.
\newblock {\em Advances in Computational Mathematics}, 6(3):207--226, 1996.

\bibitem{Dai2013}
S.~Dai and K.~Promislow.
\newblock {Geometric evolution of bilayers under the functionalized
  Cahn-Hilliard equation}.
\newblock {\em Proceedings of the Royal Society A: Mathematical, Physical and
  Engineering Science}, 469(2153), 2013.

\bibitem{Douglas1955}
J.~Douglas, Jr.
\newblock On the numerical integration of $\frac{\partial ^2 u}{\partial x^2 }
  + \frac{\partial ^2 u}{\partial y^2 } = \frac{\partial u}{\partial t}$ by
  implicit methods.
\newblock {\em Journal of the Society for Industrial and Applied Mathematics},
  3(1):42--65, 1955.

\bibitem{Douglas1962}
J.~Douglas, Jr.
\newblock Alternating direction methods for three space variables.
\newblock {\em Numerische Mathematik}, 4(1):41--63, 1962.

\bibitem{Douglas1956}
J.~Douglas, Jr. and H.~Rachford.
\newblock On the numerical solution of heat conduction problems in two and
  three space variables.
\newblock {\em Transactions of the American mathematical Society},
  82(2):421--439, 1956.

\bibitem{Fairweather1967}
G.~Fairweather and A.~Mitchell.
\newblock A new computational procedure for {ADI} methods.
\newblock {\em SIAM Journal on Numerical Analysis}, 4(2), 1967.

\bibitem{fife1979mathematical}
P.~C. Fife et~al.
\newblock {\em Mathematical aspects of reacting and diffusing systems.}
\newblock Springer Verlag., 1979.

\bibitem{Fisher1999}
R.~A. Fisher.
\newblock {\em The genetical theory of natural selection: a complete variorum
  edition}.
\newblock Oxford University Press, 1999.

\bibitem{Fitzhugh1961}
R.~FitzHugh.
\newblock Impulses and physiological states in theoretical models of nerve
  membrane.
\newblock {\em Biophysical journal}, 1(6):445--466, 1961.

\bibitem{Greengard1991}
L.~Greengard and J.~Strain.
\newblock The fast {G}auss transform.
\newblock {\em SIAM J. Sci. Statist. Comput.}, 12(1):79--94, 1991.

\bibitem{grimm2010approximation}
V.~Grimm and M.~Gugat.
\newblock Approximation of semigroups and related operator functions by
  resolvent series.
\newblock {\em SIAM Journal on Numerical Analysis}, 48(5):1826--1845, 2010.

\bibitem{Jia2008}
J.~Jia and J.~Huang.
\newblock {Krylov deferred correction accelerated method of lines transpose for
  parabolic problems}.
\newblock {\em Journal of Computational Physics}, 227(3):1739--1753, Jan. 2008.

\bibitem{Jiang2013}
S.~Jiang, L.~Greengard, and S.~Wang.
\newblock {Efficient sum-of-exponentials approximations for the heat kernel and
  their applications}.
\newblock {\em arXiv preprint arXiv:1308.3883}, pages 1--23, 2013.

\bibitem{Kassam2005}
A.~Kassam and L.~Trefethen.
\newblock {Fourth-order time-stepping for stiff PDEs}.
\newblock {\em SIAM Journal on Scientific Computing}, 26(4):1214--1233, 2005.

\bibitem{keener1998mathematical}
J.~P. Keener and J.~Sneyd.
\newblock {\em Mathematical physiology}, volume~1.
\newblock Springer, 1998.

\bibitem{krinsky1998models}
V.~Krinsky and A.~Pumir.
\newblock Models of defibrillation of cardiac tissue.
\newblock {\em Chaos: An Interdisciplinary Journal of Nonlinear Science},
  8(1):188--203, 1998.

\bibitem{Kropinski2011}
M.~C.~A. Kropinski and B.~D. Quaife.
\newblock Fast integral equation methods for the modified {H}elmholtz equation.
\newblock {\em J. Comput. Phys.}, 230(2):425--434, 2011.

\bibitem{Lambers2008}
J.~V. Lambers.
\newblock Implicitly defined high-order operator splittings for parabolic and
  hyperbolic variable-coefficient {PDE} using modified moments.
\newblock {\em Int. J. Pure Appl. Math.}, 50(2):239--244, 2009.

\bibitem{lee2014semi}
H.~G. Lee and J.-Y. Lee.
\newblock A semi-analytical {F}ourier spectral method for the {A}llen-{C}ahn
  equation.
\newblock {\em Comput. Math. Appl.}, 68(3):174--184, 2014.

\bibitem{Li2009}
J.~Li and L.~Greengard.
\newblock {High order accurate methods for the evaluation of layer heat
  potentials}.
\newblock {\em SIAM Journal on Scientific Computing}, 31(5):3847--3860, 2009.

\bibitem{Lubich1992}
C.~Lubich and R.~Schneider.
\newblock {Time discretization of parabolic boundary integral equations}.
\newblock {\em Numerische Mathematik}, 455481, 1992.

\bibitem{Lyon2010}
M.~Lyon and O.~Bruno.
\newblock {High-order unconditionally stable FC-AD solvers for general smooth
  domains II. Elliptic, parabolic and hyperbolic PDEs; theoretical
  considerations}.
\newblock {\em Journal of Computational Physics}, 229(9):3358--3381, 2010.

\bibitem{Bruno2010}
M.~Lyon and O.~P. Bruno.
\newblock High-order unconditionally stable {FC}-{AD} solvers for general
  smooth domains. {II}. {E}lliptic, parabolic and hyperbolic {PDE}s;
  theoretical considerations.
\newblock {\em J. Comput. Phys.}, 229(9):3358--3381, 2010.

\bibitem{Nagumo1962}
J.~Nagumo, S.~Arimoto, and S.~Yoshizawa.
\newblock An active pulse transmission line simulating nerve axon.
\newblock {\em Proceedings of the IRE}, 50(10):2061--2070, 1962.

\bibitem{Norsett1974}
S.~P. N{\o}rsett.
\newblock {One-step methods of Hermite type for numerical integration of stiff
  systems}.
\newblock {\em BIT Numerical Mathematics}, 14, 1974.

\bibitem{olmos2009pseudospectral}
D.~Olmos and B.~D. Shizgal.
\newblock Pseudospectral method of solution of the {F}itzhugh-{N}agumo
  equation.
\newblock {\em Math. Comput. Simulation}, 79(7):2258--2278, 2009.

\bibitem{Peaceman1955}
D.~W. Peaceman and H.~H. Rachford, Jr.
\newblock The numerical solution of parabolic and elliptic differential
  equations.
\newblock {\em J. Soc. Indust. Appl. Math.}, 3:28--41, 1955.

\bibitem{Rothe32}
E.~Rothe.
\newblock Zweidimensionale parabolische {R}andwertaufgaben als {G}renzfall
  eindimensionaler {R}andwertaufgaben.
\newblock {\em Math. Ann.}, 102(1):650--670, 1930.

\bibitem{Salazar2000}
A.~J. Salazar, M.~Raydan, and A.~Campo.
\newblock Theoretical analysis of the exponential transversal method of lines
  for the diffusion equation.
\newblock {\em Numer. Methods Partial Differential Equations}, 16(1):30--41,
  2000.

\bibitem{seal2014high}
D.~C. Seal, Y.~G{\"u}{\c{c}}l{\"u}, and A.~J. Christlieb.
\newblock High-order multiderivative time integrators for hyperbolic
  conservation laws.
\newblock {\em Journal of Scientific Computing}, 60(1):101--140, 2014.

\bibitem{shen2010numerical}
J.~Shen and X.~Yang.
\newblock Numerical approximations of {A}llen-{C}ahn and {C}ahn-{H}illiard
  equations.
\newblock {\em Discrete Contin. Dyn. Syst.}, 28(4):1669--1691, 2010.

\bibitem{Shreve2004}
S.~Shreve.
\newblock {\em Stochastic Calculus for Finance II: Continuous-Time Models}.
\newblock Number v. 11 in Springer Finance. Springer, 2004.

\bibitem{starobin1997common}
J.~Starobin and C.~Starmer.
\newblock Common mechanism links spiral wave meandering and
  wave-front--obstacle separation.
\newblock {\em Physical Review E}, 55(1):1193, 1997.

\bibitem{Tausch2007}
J.~Tausch.
\newblock {A fast method for solving the heat equation by layer potentials}.
\newblock {\em Journal of Computational Physics}, 224(2):956--969, June 2007.

\bibitem{xie2001coexistence}
F.~Xie, Z.~Qu, J.~N. Weiss, and A.~Garfinkel.
\newblock Coexistence of multiple spiral waves with independent frequencies in
  a heterogeneous excitable medium.
\newblock {\em Physical Review E}, 63(3):031905, 2001.

\end{thebibliography}

\end{document}